\newcommand{\e}{\varepsilon}
\newcommand{\E}{\mathbb{E}}
\newcommand{\N}{\mathbb{N}}
\newcommand{\rr}{\mathbb{R}}
\newcommand{\supp}{\operatorname{\text{supp}}}
\renewcommand{\H}{\mathcal{H}}
\newcommand{\F}{\mathcal{F}}
\newcommand{\K}{\mathcal{K}}
\newcommand{\C}{\mathcal{C}}
\renewcommand{\P}{\mathcal{P}}
\newcommand{\Lip}{\text{Lip}}
\newcommand{\X}{\bar{X}}
\newcommand{\U}{\bar{U}}
\newcommand{\Z}{\bar{Z}}
\renewcommand{\S}{\mathcal{S}}
\newcommand{\I}{\mathcal{I}}
\newcommand{\R}{\mathcal{R}}
\newcommand{\M}{\mathcal{M}}
\newcommand{\T}{\mathcal{T}}
\DeclareMathOperator*{\argmax}{arg\,max}
\DeclareSymbolFont{bbold}{U}{bbold}{m}{n}
\DeclareSymbolFontAlphabet{\mathbbold}{bbold}
\newcommand{\1}{\mathbbold{1}}
\newtheorem{theorem}{Theorem}
\newtheorem{proposition}{Proposition}
\newtheorem{lemma}{Lemma}
\theoremstyle{definition}
\newtheorem{definition}{Definition}
\theoremstyle{remark}
\newtheorem{remark}{Remark}
\newtheorem{model}{{\bf Model}}
\title[Multiscale approach for disease dynamics]{A multiscale approach for spatially inhomogeneous disease dynamics}
\date{\today}
\author[Schmidtchen]{Markus Schmidtchen}
\address[Markus Schmidtchen]{\newline Department of Mathematics, Imperial College London, 
	\newline London SW7 2AZ, United Kingdom}
\email{m.schmidtchen15@imperial.ac.uk}
\author[Tse]{Oliver Tse}
\address[Oliver Tse]{\newline Department of Mathematics, Technische Universit\"at Kaiserslautern, 
	\newline Erwin-Schr\"odinger-Strasse, 67663 Kaiserslautern, Germany}
\email{tse@mathematik.uni-kl.de}
\author[Wackerle]{Stephan Wackerle}
\address[Stephan Wackerle]{\newline Department of Mathematics, Technische Universit\"at Kaiserslautern, 
	\newline Erwin-Schr\"odinger-Strasse, 67663 Kaiserslautern, Germany}
\email{steph.wackerle@gmail.com}
\begin{document}
	
\begin{abstract}
In this paper we introduce an agent-based epidemiological model that generalizes the classical SIR model by Kermack and McKendrick. We further provide a multiscale approach to the derivation of a macroscopic counterpart via the mean-field limit. The chain of equations acquired via the multiscale approach are investigated, analytically as well as numerically. The outcome of these results provide strong evidence of the models' robustness and justifies their applicability in describing disease dynamics, in particularly when mobility is involved.
\end{abstract}

\maketitle

\keywords{Keywords: Epidemiology, disease dynamics, agent-based models, multiscale modeling, stochastic dynamics, mean-field limit}

\section{Introduction}\label{sec:intro}

The understanding of disease dynamics for the purpose of prevention and control has become extremely crucial in the recent years. The emergence and reemergence of infectious diseases such as influenza, HIV/AIDS, SARS, and more recently the sudden outburst of the Ebola and Zika virus are events of concern and interest to the general population throughout the world. Moreover, the environmental landscape in which we live is dynamic and often experiences dramatic shifts due to technological innovations that periodically alter the bounds of what we think is possible.

Mathematical models and computer simulations have become irreplaceable experimental tools for building and testing theories, assessing quantitative conjectures, answering specific questions, determining sensitivities to changes in parameter values, and estimating key parameters from data. Understanding the transmission characteristics of infectious diseases in communities, regions, and countries may lead to better approaches to decreasing the transmission of these diseases.

A classical epidemiology model is the renown SIR model formulated by Kermack and McKendrick in 1927 \cite{allen2008mathematical,brauer2001mathematical,hethcote2000mathematics,kermack1927contribution,murray2002mathematical}, which describes the spread of a disease among a single species of $N$ individuals. It is a compartmental model, i.e., the population is split up into three classes of individuals denoted by $S,I,R$, representing the total number of \emph{susceptible}, \emph{infected} and \emph{recovered} individuals, respectively. Since the effective time period is assumed to be sufficiently short, the model considers neither birth nor death phenomena, as well as migration of individuals. It further assumes that susceptible individuals $S$ have never been exposed to the disease, and that they may only be infected by contagious individuals. If $\beta>0$ (\emph{transmission rate}) denotes the average number of adequate contacts of a person per unit time, multiplied by the risk of infection, given contact between an infectious and a susceptible individual, and $\tau=1/\gamma$ is the mean waiting time until full recovery, then the SIR model reads
\[
 \frac{dS}{dt} = - {\beta} S I,\qquad \frac{dI}{dt} = {\beta} S I - \gamma I, \qquad \frac{dR}{dt} = \gamma I,
\]
supplemented with initial values $S(0) = S_0$, $I(0) = I_0$, $R(0) = R_0$ for some $S_0$, $I_0$, $R_0\in\rr_{\ge 0}$. Clearly the system conserves the number of individuals $N=S+I+R$ for all time $t\ge 0$. Since its introduction, there has been extensive work done on extending the model in various directions.

The understanding of human mobility plays a fundamental role to the research of vector-based and rapid geographical spread of emergent infectious diseases. A popular but rudimentary way to incorporate the spatial movement of hosts into epidemic models is to assume some type of host random movement, leading to reaction-diffusion type equations \cite{murray2003mathematical}. This strand of development was built on the pioneering work of Fisher in 1937, who used a logistic-based reaction-diffusion model to investigate the spread of an advantageous gene in a spatially extended population \cite{fisher1937wave}. For considering populations on large geographical scales, scientists have integrated topological features of traffic networks, such as highways, railways and air transportation into models for disease dynamics \cite{lewis2013dispersal}. Many of these models are stochastic in nature, which results from considering general random walks such as Brownian motion and L\'evy flights \cite{brockmann2009human,lewis2013dispersal}. 

Agent-based models and interacting particle systems have been widely used in understanding how order and stability, or a lack thereof, arises from the interaction of many agents \cite{liggett2012interacting}. In addition to the rigorous analysis of models that arise in statistical physics, biology, economics and, now, even in the physics of society, they also provide the means to predict global behavior of a system from the local dynamics between agents \cite{morale2001modeling,morale2005interacting}. Despite its simplicity, interacting particle systems may be easily extended to include highly complex interactions. Unfortunately, as the number of agents $N$ in the system increases, immense computational cost becomes inevitable. 

Multiscale modeling provides a way out. One begins by passing to the so-called {\em mean-field limit} $N\to\infty$, to obtain equations that describe the evolution of the probability density function $f_t$, over the possible states of the agents \cite{bolley2011stochastic,braun1977vlasov,carrillo2010particle,carrillo2010self,finkelshtein2010vlasov,finkelshtein2011vlasov,spohn2012large,sznitman1991topics}. From this probabilistic description, one may further characterize equations corresponding to average/statistical quantities, which have the tendency to be analytically well understood, and numerically tractable.

\begin{figure}[h]
    \centering
    \tikzstyle{block}=[draw,rectangle,fill=white, text width=2.5cm,minimum height=1cm,text centered,node distance=5em]
    \begin{tikzpicture} [auto]
        \node[block] (Mi) {{Microscopic \\ $(X_t,V_t)$}};
        \node[block] (Me)[right=10em] {{Mesoscopic \\ $f_t(x,v)$}};  
        \node[block] (Ma)[right=24em] {{Macroscopic}}; 
        \draw [->, thick] (Mi) --  node  {mean-field} (Me);	
        \draw [->, thick] (Me) --  node  {asymptotics} (Ma);
    \end{tikzpicture}
    \caption{Multiscale modeling.}\label{fig:hierarchy}
\end{figure}
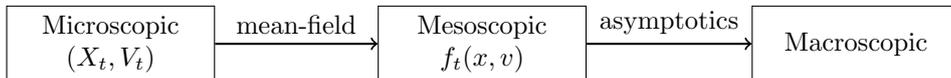
In this paper, we discuss a multiscale approach in deriving {\em macroscopic} epidemiology models from agent-based ({\em microscopic}) models. Fig.~\ref{fig:hierarchy} illustrates the general strategy in passing over from the microscopic to macroscopic regime, via the mesoscopic regime, thereby introducing the notion of multiscale \cite{klar2014approximate}. In Section~\ref{sec:micro} we introduce the agent-based (microscopic) model for epidemiology, that should generalize the classical SIR model in two ways, namely the inclusion of mobility and the continuous evolution of an agent's health status. The latter would allow for agents to resist an infection, which is absent from the classical SIR model. Section~\ref{sec:mean-field} provides an overview and some rigorous results pertaining to the limiting equation when the number of agents $N$ tends to infinity. We further provide a way to derive the corresponding macroscopic equation, which is a partial differential equation over the activity variable. In Section~\ref{sec:macro} we focus on the well-posedness of the macroscopic equation derived in the previous section and discuss possible stationary distributions. In this section we also point out a way to determine parameters, under which an epidemic may occur for the macroscopic model. Section~\ref{sec:numerics} is devoted to the numerical investigation of the models discussed in the previous sections. Here, we consider various scenarios that justify the adoption of the agent-based epidemiology model and its macroscopic counterpart to model disease dynamics in an spatially inhomogeneous environment. We finally conclude the paper in Section~\ref{sec:conclusion} with an outlook to future work and possible extensions of the models introduced within this paper.

\section{An Agent-Based Epidemiology Model}\label{sec:micro}

In this framework, we consider a system of $N\in\N$ identical agents with position $X_t^i\in\Omega\subset \rr^d$, where $\Omega$ is a domain, and {\em activity/health status} $U_t^i\in J\subset \rr$, $i=1,\ldots,N$ at time $t\in[0,\infty)$, satisfying for $i\in\{1,\ldots,N\}$, the system of stochastic differential equations 
\begin{align}\label{eq:micro}
 dX_t^i = \sqrt{2\sigma}\,dW_t^i,\qquad dU_t^i =-\H'(U_t^i)\,dt + F_N(X_t^i,U_t^i,{\bf X}_t,{\bf U}_t)\,dt,
\end{align}
with ${\bf X}_t=(X_t^1,\ldots,X_t^N)$, ${\bf U}_t=(U_t^1,\ldots,U_t^N)$, the {\em standard Wiener process} $W_t^i\in\rr^d$, and
\[
 F_N(X_t^i,U_t^i,{\bf X}_t,{\bf U}_t) = \frac{1}{N}\sum\nolimits_{j\ne i} \K(X_t^i, U_t^i, X_t^j, U_t^j),
\]
where $\H\colon J\to \rr$ is a given {\em potential landscape} describing the transition between two activity status, and $\K\colon S\times S\to \rr$ is the force describing {\em inter-agent interactions} on the state space $S:=\Omega \times J$. The initial configuration of the $N\in\N$ agents in $S^N$ are assumed to be independent and identically distributed random variables.

Mimicking the SIR model, we set $J=[-1,1]$. Then the activity $U_t^i$ is the internal variable describing an agent's activity, where $U_t^i=-1$ denotes the {\em susceptible state}, and $U_t^i=1$ the fully {\em recovered state}. This gives use the possibility to  work with a continuous health state, which mimics reality, since the infection of an individual intensifies or diminishes more or less continuously. As a first spatial extension to the SIR model and for clarity of presentation, the agents are only modelled to move randomly following the standard Wiener process with diffusion coefficient $\sqrt{2\sigma}$.

\begin{remark}\label{rem:weak_coupling}
 The factor $1/N$ in $F_N$ rescales the interaction force $\K$, and is typically called the {\em weak coupling scaling}, which will allow for the passage to mean-field. We will provide examples in Section~\ref{sec:numerics} for this case. Other forms of rescaling may be possible, but will not be discussed here.
\end{remark}

The underlying idea to prescribe $\H$ and $\K$ stems from {\em reaction rate theory} and is adapted to disease dynamics (cf.~\cite{hanggi1990reaction} and references therein). The following are three phenomenological features that are accounted for in our current model:
\begin{enumerate}
 \item A susceptible agent remains susceptible unless exposed to infectious agents. The exposure would need to exceed a certain threshold $u_* := \argmax_{u\in J} \H (u)$ for an agent to inherit the disease. This provides the possibility for an agent to resist the disease.
 \item If a susceptible agent is exposed to a sufficient amount of infectious agents over a duration of time, it will exceed the threshold $u_*$ and become infected. From that moment on, the recovery phase begins. The agent will be infectious for a period of time and then lose its ability to infect others when its activity exceeds some $\bar u\in (u_*,1]$. After some time it will arrive at the recovered state.
 \item Having reached the recovered state, the agent becomes immune to the disease. Therefore $u=1$ should be an attractor, i.e., local minimum of $\H$.
\end{enumerate}

For comparison with the classical SIR model, we devide the interval $J$ into compartments, indicating the current active health state. We denote the disjoint partition of $J$ by $\S=(-1,u_*)$, which represents the susceptible class, $\I=(u_*,\bar u)$, the infectious class, and $\R=(\bar u,1]$, the fully recovered class. The magnitude of each compartment is then measured simply by counting the number of agents located in the corresponding interval. 

Owing to these features mentioned above, we formalize them in the following definition.

\begin{definition}
 A potential landscape $\H$ is said to be feasible if $\H\in\Lip^1_b(J)$,
 \begin{enumerate}
  \item has two local minima at $u=1$ and $u=-1$, respectively, and
  \item has one global maximum at $u_*\in(-1,1)$.
 \end{enumerate}
 An inter-agent interaction force $\K$ is said to be feasible if $\K\in\Lip_b(S \times S)$, 
 \begin{enumerate}
  \item $\K(x,u,y,u)$ vanishes for any $x,y\in\Omega$, $u\in J$, and
  \item $\K(x,u,y,\nu)$ vanishes at $u\in\R\cup\{-1\}$ for any $x,y\in\Omega$, $\nu\in J$.
 \end{enumerate}
\end{definition}

\begin{remark}\label{rem:sus}
 Notice that we do not consider $u=-1$ as susceptible. This is due to the fact that, our feasible interaction force $\K$ does not allow interactions with agents that are at the state $u=-1$. Therefore, in this case, we can consider an agent at the state $u=-1$ to be immune to the disease.
\end{remark}

For completeness, we mention the solvability of our microscopic model (\ref{eq:micro}) for any finite number of agents $N\in\N$, which is an easy consequence of the strong existence and uniqueness for It\^o processes \cite{durrett1996stochastic}. In the following we set $Z_t^i=(X_t^i, U_t^i)$ for $1\le i\le N$ and denote $\P_p(S)$ to be the set of Borel probability measures with finite $p$-th moment.

\begin{proposition}\label{prop:micro}
 Let $T>0$ be arbitrary, $\H$ and $\K$ be feasible, and $N\in\N$. Furthermore, let the initial values $\{Z_0^i\}$ be mutually independent and $f_0$-distributed random variables with $f_0\in\P_2(S)$. Then there exists a unique, $t$-continuous, adapted solution ${\bf Z}_t=({\bf X}_t, {\bf U}_t)$ of the microscopic model (\ref{eq:micro}) with $\mathbb{E}\big[\int_0^T |{\bf Z}_t|^2\,dt\big] < \infty$.
\end{proposition}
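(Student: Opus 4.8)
The plan is to recast the coupled system (\ref{eq:micro}) as a single It\^o stochastic differential equation on the product state space $S^N\subset\rr^{(d+1)N}$ and to verify the two hypotheses---global Lipschitz continuity and linear growth of the coefficients---that guarantee strong existence and uniqueness for a square-integrable initial condition. Writing $\mathbf{Z}_t=(\mathbf{X}_t,\mathbf{U}_t)$ and stacking the $2N$ equations, the diffusion coefficient is the constant matrix acting as $\sqrt{2\sigma}\,\mathrm{Id}$ on the position block and as $0$ on the activity block; being constant, it is trivially Lipschitz and of linear growth, so all of the analytic content concentrates on the drift $b(\mathbf{z})$, whose position entries vanish and whose $i$-th activity entry reads
\[
 b^i(\mathbf{z}) = -\H'(u^i) + \frac1N\sum\nolimits_{j\ne i}\K(x^i,u^i,x^j,u^j).
\]

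First I would establish the global Lipschitz bound. Feasibility of $\H$ gives $\H\in\Lip^1_b(J)$, so $\H'$ is bounded and Lipschitz with a constant $L_\H$, while feasibility of $\K$ gives $\K\in\Lip_b(S\times S)$, bounded and Lipschitz with a constant $L_\K$. Estimating $|b^i(\mathbf{z})-b^i(\tilde{\mathbf{z}})|$ by the triangle inequality splits it into the potential term, bounded by $L_\H|u^i-\tilde u^i|$, and the interaction sum, each summand bounded by $L_\K(|z^i-\tilde z^i|+|z^j-\tilde z^j|)$ where $z^i=(x^i,u^i)$. Summing over $i$ and over $j\ne i$ and collecting terms, the weak-coupling factor $1/N$ compensates the $N-1$ summands, so that $|b(\mathbf{z})-b(\tilde{\mathbf{z}})|\le C|\mathbf{z}-\tilde{\mathbf{z}}|$ with a constant $C$ depending only on $L_\H$ and $L_\K$, which the $1/N$ scaling even keeps independent of $N$. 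The linear growth condition is immediate: boundedness of $\H'$ and $\K$ yields $|b(\mathbf{z})|\le C'$ uniformly, hence $|b(\mathbf{z})|\le C'(1+|\mathbf{z}|)$.

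With both structural conditions verified, the classical theorem for It\^o processes (see, e.g., \cite{durrett1996stochastic}) applies. Since the initial data $\{Z_0^i\}$ are independent and $f_0$-distributed with $f_0\in\P_2(S)$, we have $\E|\mathbf{Z}_0|^2<\infty$, and the theorem furnishes a unique, $t$-continuous, adapted strong solution on $[0,T]$. I would then recover the stated second-moment bound by the standard a priori estimate: applying It\^o's formula to $|\mathbf{Z}_t|^2$, taking expectations, and invoking the linear growth of $b$ together with the It\^o isometry leads, via Gronwall's inequality, to $\sup_{t\le T}\E|\mathbf{Z}_t|^2\le C(1+\E|\mathbf{Z}_0|^2)e^{CT}<\infty$, whence $\E\big[\int_0^T|\mathbf{Z}_t|^2\,dt\big]<\infty$.

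I do not expect a genuine obstacle, since the boundedness and Lipschitz regularity built into the notion of feasibility make both hypotheses immediate; this is precisely why the result is an ``easy consequence.'' The only point deserving care is the state constraint: since the coefficients are a priori only defined on $S=\Omega\times J$, one should first extend $\H'$ and $\K$ to bounded, globally Lipschitz functions on all of $\rr^d$ and $\rr$, solve the extended system, and then observe that the feasibility conditions (the inward-pointing potential at $u=\pm1$ together with the vanishing of $\K$ on $\R\cup\{-1\}$) are arranged so that the activity components remain in $J$, whereby the solution of the extended problem coincides with the desired solution of (\ref{eq:micro}).
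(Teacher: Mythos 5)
Your proposal is correct and follows exactly the route the paper intends: the paper gives no proof of Proposition~\ref{prop:micro}, simply noting it is ``an easy consequence of the strong existence and uniqueness for It\^o processes,'' and your verification of the global Lipschitz and linear-growth hypotheses for the stacked drift (using the boundedness and Lipschitz continuity built into feasibility of $\H$ and $\K$) is precisely the content of that citation. Your closing remark on extending the coefficients beyond $S$ and checking invariance of $J$ (the drift in the activity component vanishes at $u=\pm1$ by feasibility) is a point the paper glosses over, and is a welcome addition.
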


\subsubsection*{Potential landscapes}
An exemplary class of potential landscapes $\H$ satisfying these features are known as {\em double-well potentials}, which includes, for example, potentials of the form
\[
 \H_{\alpha,\beta}(u) = \alpha(u^2-1)^2 + \beta(1-\sin(\pi u/2)),
\]
for suitable parameters $\alpha,\beta\ge0$. It is easy to see that the set of parameters $\alpha>0$, $\beta<2^5\alpha/\pi^2$, provides a feasible set of potential landscapes. One observes that the minima are local attractors. Therefore, an agent at the state $u$ in a neighborhood around $\{-1,1\}$ will remain there unless perturbed by a sufficient amount of external force. Moreover, an agent is said to have been \emph{infected} at some point of time if its activity has exceeded the threshold $\H(u_*)$. 

There are other possibilities for the potential landscape $\H$. For instance, one may use a smooth version of a piecewise affine linear function, as seen in Fig.~\ref{fig:potential}. Practically, the potential landscapes provided in Fig.~\ref{fig:potential} describes a disease that is easily contracted, and requires a relatively long time for complete recovery.

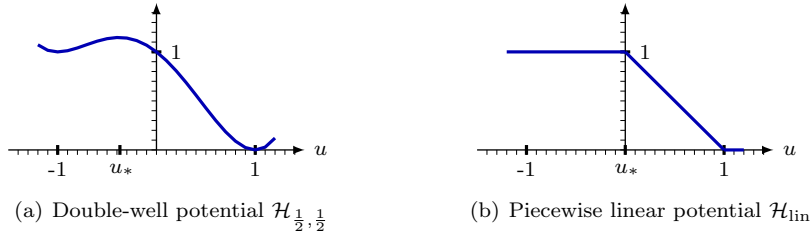
\begin{figure}[ht]
 \centering
 \subfigure[Double-well potential $\H_{\frac{1}{2},\frac{1}{2}}$]{
 \begin{tikzpicture}[line width=0.5pt, >=latex, xscale = 1.3, yscale=1.3]
  
  		% x - axis
  		\draw[->] (-1.5,0) -- (1.5,0) node[right] {\footnotesize$u$};
  		
  		% y - axis
  		\draw[->] (0,0) -- (0,1.5) ;
  
          % strokes
          % x - axis strokes
          \foreach \x in {-14,...,14}
          \draw [ultra thin] (0.1*\x,0) -- (0.1*\x,-0.05);
              
          \foreach \y in {0,...,14}
          \draw [ultra thin] (0,0.1*\y) -- (-0.05,0.1*\y);
          
          % X Axis numbers
  		  \draw[very thick] (-1,0.05) -- (-1,-0.05);
          \draw (-1.,-0.2) node {\footnotesize -1};
          
          \draw[very thick] (-0.37,0.05) -- (-0.37,-0.05);
          \draw (-0.35,-0.2) node {\footnotesize $u_*$};
          
          \draw[very thick] (1,0.05) -- (1,-0.05);
          \draw (1.,-0.2) node {\footnotesize 1};
          
          \draw[very thick] (-0.05,1.) -- (0.05,1.);
          \draw (0.2,1.) node {\footnotesize 1};
          
          % Plots
          \draw[blue!70!black, very thick, domain=-1.2:1.2] plot (\x, {(1/2)*(\x*\x-1)*(\x*\x -1) + (1/2)*(1-sin(deg(pi*\x*0.5)))} );         
  	\end{tikzpicture}
  }\hspace*{0.1\textwidth}
 \subfigure[Piecewise linear potential $\H_{\text{lin}}$]{
 \begin{tikzpicture}[line width=0.5pt, >=latex, xscale = 1.3, yscale=1.3]
  
  		% x - axis
  		\draw[->] (-1.5,0) -- (1.5,0) node[right] {\footnotesize$u$};
  		
  		% y - axis
  		\draw[->] (0,0) -- (0,1.5) ;
  
          % strokes
          % x - axis strokes
          \foreach \x in {-14,...,14}
          \draw [ultra thin] (0.1*\x,0) -- (0.1*\x,-0.05);
              
          \foreach \y in {0,...,14}
          \draw [ultra thin] (0,0.1*\y) -- (-0.05,0.1*\y);
          
          % X Axis numbers
  	   	  \draw[very thick] (-1,0.05) -- (-1,-0.05);
          \draw (-1.,-0.2) node {\footnotesize -1};
          
          \draw[very thick] (0,0.05) -- (0,-0.05);
          \draw (0.02,-0.2) node {\footnotesize $u_*$};
          
          \draw[very thick] (1,0.05) -- (1,-0.05);
          \draw (1.,-0.2) node {\footnotesize 1};
          
          \draw[very thick] (-0.05,1.) -- (0.05,1.);
          \draw (0.2,1.) node {\footnotesize 1};
          
          % Plots
          \draw[blue!70!black, very thick, domain=0:1] plot (\x, {(1-\x)} );
          \draw[blue!70!black, very thick, domain=-1.2:0] plot (\x, {1} );
          \draw[blue!70!black, very thick, domain=1:1.2] plot (\x, {0} );
  	\end{tikzpicture}
 }
 \caption{Examples of potential landscapes $\H$.}\label{fig:potential}
\end{figure}

\subsubsection*{Interaction forces}
The inter-agent interactions are described by the force $\K$, which typically depends on the distance $r=|X_t^i-X_t^j|$ between agents $i$ and $j$, and their corresponding activities $U_t^i$ and $U_t^j$. A product ansatz of the form
\[
 \K(x,u,y,\nu) = \Phi(x-y)\,\psi(u)\,\chi(\nu),\qquad (x,u),(y,\nu)\in S,
\]
may be used, since it easily captures the features mentioned above. Roughly speaking, the function $\Phi$ should be a non-negative even function, i.e., $\Phi(r)=\Phi(-r)$, which indicates if two agents are within close proximity for possible interactions to occur. Since the probability of infection is highest when particles are closest, we set $\Phi(0)=1$. On the other hand, the function $\psi$ indicates when an agent is susceptible to infection, whereas $\chi$ indicates when an agent is infectious. Based on our basic features, an infected agent should not be allowed to infect an agent that is already infected. Furthermore, an infected agent should not change the activity of a recovered agent. Therefore, a feasible $\K$ would have that $\supp(\psi)\cap\supp(\chi)=\emptyset$. Moreover, we require the supports of $\psi$ and $\chi$ to satisfy $\supp(\psi)\subset \S$ and $\supp(\chi)\subset \I$.

To conceive an easy and intuitive interaction force that fits the requirements for the model, we choose $\Phi(r)=\1_{B_R(0)}(r)$, $\psi = \1_\S$ and $\chi=c_\chi \1_\I$, where $c_\chi$ represents the strength of infection. Fig.~\ref{fig:interaction} provides an elementary example of an interaction force $\K$ made up of the functions $\Phi, \psi$ and $\chi$. For feasibility reasons, smooth versions of $\Phi$, $\psi$ and $\chi$ shown in the figure are used instead. 

\begin{figure}[ht]
    \centering
    \subfigure[Activation function $\Phi$]{
    \label{fig:interaction_potential_disc}
    \centering
	\begin{tikzpicture}[line width=0.5pt, >=latex, xscale = 1.35, yscale=1.35]
		% x - axis
		\draw[->] (-1.5,0) -- (1.5,0) node[right] {\footnotesize$r$};
		
		% y - axis
		\draw[->] (0,0) -- (0,1.5) ;

        % strokes
        % x - axis strokes
        \foreach \x in {-14,...,14}
            \draw [ultra thin] (0.1*\x,0) -- (0.1*\x,-0.05);
            
        \foreach \y in {0,...,14}
        \draw [ultra thin] (0,0.1*\y) -- (-0.05,0.1*\y);
        
        % X Axis numbers
		\draw[very thick] (-1,0.05) -- (-1,-0.05);
        \draw (-1.,-0.2) node {};
        
        \draw[very thick] (-0.5,0.05) -- (-0.5,-0.05);
        \draw (-0.5,-0.2) node {\footnotesize $-R$};
        
        \draw[very thick] (0.5,0.05) -- (0.5,-0.05);
        \draw (0.5,-0.2) node {\footnotesize $R$};        
        
        \draw[very thick] (1,0.05) -- (1,-0.05);
        \draw (1.,-0.2) node {};
        
        \draw[very thick] (-0.05,1.) -- (0.05,1.);
        \draw (0.2,1.) node {\footnotesize 1};
        
        % Legend
		\node at (-0.5,0.5) {\footnotesize\color{red!70!black}{$\Phi(r)$}};
        
        % Plots
        \draw[red!70!black, ultra thick, domain=-1:-0.5,smooth] plot (\x, {0});
        \draw[red!70!black, ultra thick, domain=-0.5:0.5,smooth] plot (\x, {1});
        \draw[red!70!black, ultra thick, domain=0.5:1,smooth] plot (\x, {0});
        
	\end{tikzpicture}
	}\quad
	\subfigure[Passive infection potential $\psi$]{
	    \label{fig:passive_infection_potential_disc}
	    \centering
		\begin{tikzpicture}[line width=0.5pt, >=latex, xscale = 1.35, yscale=1.35]
	
			% x - axis
			\draw[->] (-1.5,0) -- (1.5,0) node[right] {\footnotesize$u$};
			
			% y - axis
			\draw[->] (0,0) -- (0,1.5) ;
	
	        % strokes
	        % x - axis strokes
	        \foreach \x in {-14,...,14}
	        \draw [ultra thin] (0.1*\x,0) -- (0.1*\x,-0.05);
	            
	        \foreach \y in {0,...,14}
	        \draw [ultra thin] (0,0.1*\y) -- (-0.05,0.1*\y);
	        
	        % X Axis numbers
			\draw[very thick] (-1,0.05) -- (-1,-0.05);
	        \draw (-1.,-0.2) node {\footnotesize -1};
	        
	        \draw[very thick] (-0.37,0.05) -- (-0.37,-0.05);
	        \draw (-0.40,-0.2) node {\footnotesize $u_*$};
	        
            \draw[very thick] (0.2,0.05) -- (0.2,-0.05);
            \draw (0.2,-0.2) node {\footnotesize $\bar u$};
	        
	        \draw[very thick] (1,0.05) -- (1,-0.05);
	        \draw (1.,-0.2) node {\footnotesize 1};
	        
	        \draw[very thick] (-0.05,1.) -- (0.05,1.);
	        \draw (0.2,1.) node {\footnotesize 1};
	        
	        % Legend
			\node at (-0.5,0.5) {\footnotesize\color{red!70!black}{$\psi(u)$}};
	        
	        % Plots
	        \draw[red!70!black, ultra thick, domain=-1:-0.37,smooth] plot (\x, {1});
	        \draw[red!70!black, ultra thick, domain=-0.37:0.2,smooth] plot (\x, {0});
	        \draw[red!70!black, ultra thick, domain=0.2:1,smooth] plot (\x, {0});
	        
		\end{tikzpicture}
	}\quad
    \subfigure[Active infection potential $\chi$]{
    \label{fig:active_infection_potential_disc}
    \centering
	\begin{tikzpicture}[line width=0.5pt, >=latex, xscale = 1.35, yscale=1.35]

		% x - axis
		\draw[->] (-1.5,0) -- (1.5,0) node[right] {\footnotesize$\nu$};
		
		% y - axis
		\draw[->] (0,0) -- (0,1.5) ;

        % strokes
        % x - axis strokes
        \foreach \x in {-14,...,14}
        \draw [ultra thin] (0.1*\x,0) -- (0.1*\x,-0.05);
            
        \foreach \y in {0,...,14}
        \draw [ultra thin] (0,0.1*\y) -- (-0.05,0.1*\y);
        
        % X Axis numbers
		\draw[very thick] (-1,0.05) -- (-1,-0.05);
        \draw (-1.,-0.2) node {\footnotesize -1};
        
        \draw[very thick] (-0.37,0.05) -- (-0.37,-0.05);
        \draw (-0.40,-0.2) node {\footnotesize $u_*$};
        
        \draw[very thick] (0.2,0.05) -- (0.2,-0.05);
        \draw (0.2,-0.2) node {\footnotesize $\bar u$};
        
        \draw[very thick] (1,0.05) -- (1,-0.05);
        \draw (1.,-0.2) node {\footnotesize 1};
        
        \draw[very thick] (-0.05,1.) -- (0.05,1.);
        \draw (0.2,0.98) node {\hspace*{1.5em}\footnotesize $c_\chi$};
        
        % Legend
        \node at (-0.5,0.5) {\footnotesize\color{red!70!black}{$\chi(u)$}};
        
        % Plots
        \draw[red!70!black, ultra thick, domain=-1:-0.37,smooth] plot (\x, {0});
        \draw[red!70!black, ultra thick, domain=-0.37:0.2,smooth] plot (\x, {1});
        \draw[red!70!black, ultra thick, domain=0.2:1,smooth] plot (\x, {0});
        
	\end{tikzpicture}
	}
	\caption{Possible choice of interaction force $\K(x,u,y,\nu) = \Phi(x-y)\,\psi(u)\,\chi(\nu)$.}\label{fig:interaction}
\end{figure}
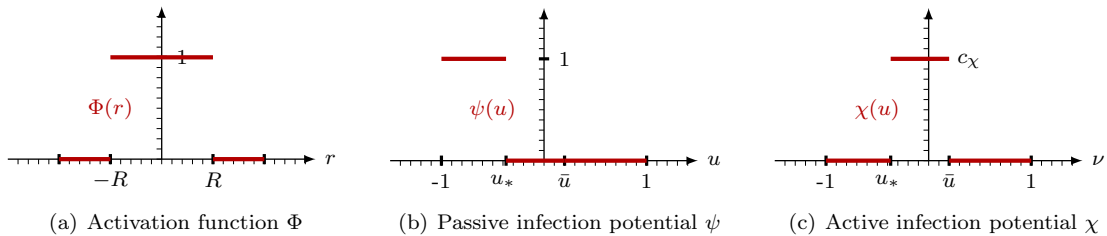

\section{The mean-field and macroscopic equations}\label{sec:mean-field}

In this section, we discuss the limiting process that appears when passing to the mean-field limit $N\to\infty$. The idea in obtaining a limiting equation is to replace the interaction term $F_N$, which depends on all binary interactions of any pair $(Z_t^i,Z_t^j)$, with an interaction term $\F$ that describes the interaction of a single agent with an averaged field, the so-called mean-field $f_t$. Intuitively, if one considers the empirical measure $\mu_t^N$ of the stochastic processes $\{Z_t^i\}$ given by
\[
 \mu_t^N(dz) = \frac{1}{N}\sum\nolimits_{i=1}^N \delta_{Z_t^i}(dz),
\]
where $\delta_{z}$ denotes the Dirac measure at $z\in S$, then one could formally write
\[
 F_N(Z_t^i,{\bf Z}_t) = \frac{1}{N}\sum\nolimits_{j\ne i} \K(Z_t^i, Z_t^j) = \int_{S} \K(Z_t^i,z') \mu_t^N(dz').
\]
One then strives to show that the empirical measure $\mu_t^N$ converges to a deterministic limit measure $f_t$ in the sense of convergence in law for the underlying random variables. In this case, one may then show that
\[
 F_N(Z_t^i,{\bf Z}_t) \;\longrightarrow\;\F[f_t](Z_t^i) = \int_{S} \K(Z_t^i,z')f_t(dz'),
\]
in some appropriate notion of convergence, as $N\to\infty$.

\subsection{Nonlinear process and mean-field equations}
Indeed, the conjecture is that the process $Z_t^i$ for some fixed $1\le i\le N$ generated by the microscopic system (\ref{eq:micro}) converges to a mean-field process, the so-called {\em McKean nonlinear process}, given by the solution of 
\begin{align}\label{eq:nonlinear}
 d\X_t^i = \sqrt{2\sigma}\,dW_t^i,\qquad d\U_t^i = -\H'(\U_t^i)\,dt + \F[f_t](\X_t^i,\U_t^i)\,dt,
\end{align}
with the same family of standard Wiener processes $\{W_t^i\}$ as in (\ref{eq:micro}), and
\[
 \F[f_t](z) = \int_{S} \K(z,z')f_t(dz'),
\]
where $f_t=\text{law}(\Z_t^i)$ is the law of the random variable $\Z_t^i=(\X_t^i,\U_t^i)$. Since the law $f_t$ is required in the definition of the process $\Z_t^i$, we have a nonlinear stochastic system at hand. This nonlinear process is supplemented with mutually independent $f_0$-distributed initial conditions $\Z_0^i$. Note that the solutions $\{\Z_t^i\}$ are also independent and identically distributed with the joint law $f_t^{\otimes N}$.

Applying It\^{o}'s formula to the nonlinear process provides an evolution equation for their common law $f_t$, given by
\begin{align}\label{eq:fp}
 \partial_t f_t - \partial_u (\H'f_t - \F[f_t]f_t)  = \sigma\Delta_x f_t,\qquad \lim\nolimits_{t\searrow 0} f_t = f_0.
\end{align}
This nonlinear and nonlocal kinetic equation is commonly known as the {\em Fokker--Planck} equation corresponding to the nonlinear process (\ref{eq:nonlinear}).

As in the microscopic case, we recall an existence and uniqueness result for the nonlinear process (\ref{eq:nonlinear}), as well as the nonlinear kinetic equation (\ref{eq:fp}). Under the feasibility assumptions on $\H$ and $\K$, the proof of the following result is rather standard and may be found, for example, in \cite{bolley2011stochastic,sznitman1991topics}.

\begin{proposition}\label{prop:nonlinear}
 Let $T>0$ be arbitrary, $\H$ and $\K$ be feasible, $f_0\in \P_2(S)$. Then the nonlinear process (\ref{eq:nonlinear}) has a pathwise unique solution $\Z\in\C([0,T),S)$, with $f_t=\text{law}(\Z_t)\in \P_2(S)$, $t\in[0,T)$, satisfying the Fokker--Planck equation (\ref{eq:fp}).
\end{proposition}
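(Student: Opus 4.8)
The plan is to construct the solution by a fixed-point argument on the space of probability measures over path space, exploiting the fact that feasibility renders both $\H'$ and $\F[\mu]$ bounded and Lipschitz. First I would fix a candidate law trajectory: given $\mu\in\P_2(\C([0,T],S))$ with time-marginals $\mu_t$, I freeze the mean-field term and consider the \emph{linear} SDE obtained from \eqref{eq:nonlinear} by replacing $f_t$ with $\mu_t$, namely $d\X_t=\sqrt{2\sigma}\,dW_t$ and $d\U_t=-\H'(\U_t)\,dt+\F[\mu_t](\X_t,\U_t)\,dt$ with initial law $f_0$. Since $\H\in\Lip^1_b(J)$ gives a bounded Lipschitz $\H'$, and since $|\F[\mu_t](z)-\F[\mu_t](z')|\le\Lip(\K)\,|z-z'|$ by feasibility of $\K$, the drift is globally Lipschitz in the state variable, uniformly in time; the standard strong existence and uniqueness theory for It\^o processes then yields a unique solution $\Z^\mu$. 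This defines a map $\Phi(\mu)=\mathrm{law}(\Z^\mu)$, whose fixed point is the sought nonlinear process.

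Next I would show that $\Phi$ maps $\P_2(\C([0,T],S))$ into itself and is a contraction after iteration. The position component is explicit, $\X_t=\X_0+\sqrt{2\sigma}\,W_t$, so $\E|\X_t|^2\le 2\E|\X_0|^2+4\sigma d\,t<\infty$ because $f_0\in\P_2(S)$, while the activity stays in the bounded set $J=[-1,1]$; hence the second moment is finite uniformly on $[0,T]$ and $\Phi(\mu)\in\P_2(\C([0,T],S))$. For the contraction estimate I couple $\Z^\mu$ and $\Z^{\tilde\mu}$ through the same Wiener process and the same initial datum, so that the positions coincide and only the activities differ:
\[
 |\U^\mu_t-\U^{\tilde\mu}_t|
 \le\int_0^t|\H'(\U^\mu_s)-\H'(\U^{\tilde\mu}_s)|\,ds
  +\int_0^t|\F[\mu_s](\Z^\mu_s)-\F[\tilde\mu_s](\Z^{\tilde\mu}_s)|\,ds.
\]
The first integrand is bounded by $\Lip(\H')\,|\U^\mu_s-\U^{\tilde\mu}_s|$. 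For the second, I split it as $|\F[\mu_s](\Z^\mu_s)-\F[\mu_s](\Z^{\tilde\mu}_s)|+|\F[\mu_s](\Z^{\tilde\mu}_s)-\F[\tilde\mu_s](\Z^{\tilde\mu}_s)|$; the first piece is at most $\Lip(\K)\,|\Z^\mu_s-\Z^{\tilde\mu}_s|$, while the second, being the integral of the Lipschitz function $\K(\Z^{\tilde\mu}_s,\cdot)$ against $\mu_s-\tilde\mu_s$, is controlled by $\Lip(\K)\,W_1(\mu_s,\tilde\mu_s)\le\Lip(\K)\,W_2(\mu_s,\tilde\mu_s)$ via Kantorovich--Rubinstein duality.

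Then, squaring, taking suprema and expectations, and applying Gr\"onwall to the resulting integral inequality gives $\E\big[\sup_{s\le t}|\Z^\mu_s-\Z^{\tilde\mu}_s|^2\big]\le C\int_0^t W_2(\mu_s,\tilde\mu_s)^2\,ds$ for some $C=C(\Lip(\H'),\Lip(\K),T)$. Since the synchronous coupling bounds the Wasserstein distance of the laws from above, $W_2(\Phi(\mu)_t,\Phi(\tilde\mu)_t)^2\le\E[\,|\Z^\mu_t-\Z^{\tilde\mu}_t|^2]$, iterating this estimate along the Picard sequence $\mu^{(n+1)}=\Phi(\mu^{(n)})$ produces the familiar factorial bound $\sup_{t\le T}W_2(\mu^{(n+1)}_t,\mu^{(n)}_t)^2\le \frac{(CT)^n}{n!}\,\sup_{t\le T}W_2(\mu^{(1)}_t,\mu^{(0)}_t)^2$, which is summable. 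Hence the sequence is Cauchy and converges to a unique fixed point $\mu^*$; its marginals $f_t=\mu^*_t$ together with the associated process $\Z=\Z^{\mu^*}$ solve \eqref{eq:nonlinear}, and pathwise uniqueness follows from the same Gr\"onwall estimate. Finally, applying It\^o's formula to $\varphi(\Z_t)$ for $\varphi\in C^\infty_c(S)$ and taking expectations identifies the weak formulation of the Fokker--Planck equation \eqref{eq:fp} for $f_t$.

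I expect the main obstacle to be the contraction step, specifically the measure-dependence of the drift: the only genuinely nonlinear contribution is $\F[\mu_s]-\F[\tilde\mu_s]$, and handling it requires passing from the supremum norm to the Wasserstein metric through Kantorovich--Rubinstein duality and the Lipschitz bound on $\K$. By contrast, the additive, state- and measure-independent diffusion removes all difficulty from the stochastic part and renders the position process explicit, so that the remaining analytical work concerns only the drift-driven activity equation on $J$.
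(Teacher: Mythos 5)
Your argument is correct and is precisely the classical McKean--Vlasov fixed-point scheme (freeze the measure, solve the resulting linear SDE, contract in the Wasserstein metric via synchronous coupling and Kantorovich--Rubinstein duality, then recover the Fokker--Planck equation by It\^o's formula) that the paper itself invokes by citing Sznitman and Bolley et al.\ rather than writing out a proof. The one step you assert without detail --- that the activity component remains in $J=[-1,1]$ --- does follow from feasibility, since the drift $-\H'+\F[\mu_t]$ vanishes at $u=\pm 1$ and uniqueness for the Lipschitz activity ODE prevents the trajectory from crossing these equilibria.
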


Having unique strong solutions corresponding to (\ref{eq:micro}) and (\ref{eq:nonlinear}), we may provide a quantitative estimate of the difference between the two solutions $Z_t^i$ and $\Z_t^i$ for any $1\le i\le \N$, and consequently also the difference between their respective laws. For completeness, we provide the proof of the following theorem in Appendix~\ref{append:proof}.

\begin{theorem}\label{thm:mean-field}
 Let $T>0$ be arbitrary, $\H$ and $\K$ be feasible, and $f_0\in\P_2(S)$. Consider the solutions $Z_t^i$, $\Z_t^i$ to the equations (\ref{eq:micro}), (\ref{eq:nonlinear}) for $t\in[0,T]$ and each $1\le i\le N\in\N$ with mutually independent $f_0$-distributed $Z_0^i$, $\Z_0^i$, provided by Propositions~\ref{prop:micro} and \ref{prop:nonlinear} respectively. Then there exists a constant $C>0$, independent of $N\in\N$, such that
\begin{align}\label{eq:chaos}
 \sup\nolimits_{t\in[0,T]}\E[|Z_t^i - \Z_t^i|^2] \le CN^{-1},
\end{align}
for any $1\le i\le N\in\N$.
\end{theorem}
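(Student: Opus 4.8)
The plan is to run a synchronous coupling argument in the spirit of Sznitman. I would couple the two systems by driving the nonlinear process (\ref{eq:nonlinear}) with exactly the same Wiener processes $\{W_t^i\}$ and the same initial data, $\Z_0^i = Z_0^i$, as the microscopic system (\ref{eq:micro}). Since the spatial components obey the identical decoupled equation $dX_t^i = \sqrt{2\sigma}\,dW_t^i = d\X_t^i$ with the same starting point, one gets $X_t^i = \X_t^i$ pathwise for all $t$, so that $|Z_t^i - \Z_t^i| = |U_t^i - \U_t^i|$ and only the activity variable must be controlled. Writing $D_t^i := U_t^i - \U_t^i$, the noise cancels in the difference and $D_t^i$ is a pure drift (hence absolutely continuous) term,
\begin{align*}
 D_t^i = \int_0^t \bigl(\H'(\U_s^i) - \H'(U_s^i)\bigr)\,ds + \int_0^t \bigl(F_N(Z_s^i,{\bf Z}_s) - \F[f_s](\Z_s^i)\bigr)\,ds.
\end{align*}
In particular no stochastic-integral estimate (It\^o isometry or Burkholder--Davis--Gundy) is needed, and everything reduces to Cauchy--Schwarz in time.

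Next I would decompose the interaction error by inserting the nonlinear copies into $\K$, writing $F_N(Z_s^i,{\bf Z}_s) - \F[f_s](\Z_s^i) = A_s^i + B_s^i$ with
\begin{align*}
 A_s^i &:= \frac1N\sum\nolimits_{j\ne i}\bigl(\K(Z_s^i,Z_s^j) - \K(\Z_s^i,\Z_s^j)\bigr),\\
 B_s^i &:= \frac1N\sum\nolimits_{j\ne i}\K(\Z_s^i,\Z_s^j) - \F[f_s](\Z_s^i).
\end{align*}
For the drift term and for $A_s^i$ I would use that $\H'$ is Lipschitz (as $\H\in\Lip^1_b(J)$) and that $\K\in\Lip_b(S\times S)$: since the positions coincide, $|\K(Z_s^i,Z_s^j)-\K(\Z_s^i,\Z_s^j)| \le L_\K(|D_s^i| + |D_s^j|)$, so that after squaring, applying Jensen to the empirical average and using exchangeability of the coupled copies, one obtains $\E[|A_s^i|^2] \le C\,\E[|D_s^i|^2]$, with $\E[|D_s^j|^2]=\E[|D_s^i|^2]$ for every $j$.

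The crux is the term $B_s^i$, which is where the factor $N^{-1}$ is produced and which I expect to be the main obstacle. Conditionally on $\Z_s^i$, the summands $\K(\Z_s^i,\Z_s^j) - \F[f_s](\Z_s^i)$, $j\ne i$, are independent (the $\Z_s^j$ are i.i.d.\ with law $f_s$ and independent of $\Z_s^i$), centred, and bounded by $2\|\K\|_\infty$; hence the cross terms vanish and a direct variance computation gives $\E[|B_s^i|^2] \le C\|\K\|_\infty^2/N$. This is the quantitative law-of-large-numbers step at the heart of propagation of chaos, and the only place where independence of the decoupled copies is essential. Assembling the three bounds, setting $g(t):=\sup_i\E[|D_t^i|^2]$ and using Cauchy--Schwarz in the time integrals yields
\begin{align*}
 g(t) \le C_T\int_0^t g(s)\,ds + \frac{C_T}{N}, \qquad t\in[0,T],
\end{align*}
whereupon Gr\"onwall's inequality gives $g(t) \le C_T e^{C_T T} N^{-1}$ uniformly on $[0,T]$. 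Since $|Z_t^i - \Z_t^i| = |D_t^i|$, this is precisely (\ref{eq:chaos}).
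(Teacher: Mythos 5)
Your proposal is correct and follows essentially the same route as the paper's proof in Appendix~\ref{append:proof}: the same synchronous coupling with shared Wiener processes and initial data, the identical decomposition of the interaction error into a Lipschitz part and a centred fluctuation part, the same conditional-independence variance computation producing the $N^{-1}$ factor, and a Gr\"onwall closure. The only cosmetic difference is that you invoke exchangeability to identify $\E[|D_s^j|^2]$ with $\E[|D_s^i|^2]$, where the paper instead averages over $i$ and applies Gr\"onwall twice; both are valid.
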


 The property of the stochastic empirical measure becoming deterministic in the limit is equivalent to the requirement that the law of the $N$ particles become {\em chaotic} in the limit \cite{sznitman1991topics}. This means that, for a fixed $k$, the law of the first $k$ agents $f_t ^{(k)}$ satisfies
 \[
  f_t ^{(k)} \longrightarrow f_t^{\otimes k}\quad\text{in}\;\;\P(S^k),
 \]
 as $N\to \infty$, assuming the $k$ agents to be initially $f_0^{\otimes k}$-distributed.
 
 In fact, estimate (\ref{eq:chaos}) ensures both theses properties:
\begin{enumerate}
 \item {\em Propagation of chaos property}. Indeed, we deduce from \eqref{eq:chaos} the estimate
 \begin{align*}
  W_2^2(f_t^{(k)},f_t^{\otimes k}) \le \E[|(Z_t^1,\ldots,Z_t^k)-(\Z_t^1,\ldots,\Z_t^k)|^2] \le k\,CN^{-1},
 \end{align*}
 where $W_2$ denotes the Wasserstein distance between measures in $\P_2(S)$ defined by
 \[
  W_2(\mu,\bar\mu) = \inf\nolimits \sqrt{\E[|Z-\Z|^2]}.
 \]
 The infimum is taken over all coupling of random variables $(Z,\Z)$ in $S\times S$ having distributions $\mu$ and $\bar\mu$ respectively (cf.~\cite{villani2008optimal}).
 
 \item {\em Convergence of the stochastic empirical measure $\mu_t^N$ towards the deterministic mean-field distribution $f_t$}. Due to \eqref{eq:chaos}, we have for any $\varphi\in\Lip_b(S)$ the estimate
 \begin{align*}
  \E\left[ \frac{1}{N}\sum\nolimits_{i=1}^N \varphi(Z_t^i) - \int_S \varphi f_t(dz)\right] &\\
  &\hspace*{-12em}\le 2\E\left[ \frac{1}{N}\sum\nolimits_{i=1}^N|\varphi(Z_t^i) - \varphi(\Z_t^i)|^2 + \left| \frac{1}{N}\sum\nolimits_{i=1}^N \varphi(\Z_t^i) - \int_S \varphi f_t(dz) \right|^2 \right] \le CN^{-1},
 \end{align*}
 for some constant $C>0$ independent of $N$ and $t\in[0,T]$. Notice that the second term in the first inequality follows from the law of large numbers. Indeed, this holds since $\{\Z_t^i\}$ are mutually independent and identically distributed (see also Appendix~\ref{append:proof}).
\end{enumerate}

Rigorous results of this form were known for the deterministic case since the 70's \cite{braun1977vlasov,dobrushin1979vlasov,spohn2012large}, and then extended to the stochastic case in \cite{bolley2011stochastic,sznitman1991topics}, see also \cite{mckean1967propagation}.

\subsection{Macroscopic equations}
At this point, one may derive equations governing macroscopic quantities based on the moments of $f_t$ by introducing closure relations or further assumptions on $\H$ and $\K$. In the following, we assume $f_t$ to have a sufficiently smooth density with respect to the Lebesgue measure on $S$, which we denote again by $f_t$. 

The following are several examples that may be of interest:

\begin{model}\label{ex:heat}
The zeroth order moment of $f_t$ w.r.t.~$u$, i.e., the first marginal of $f_t$:
 \[
  \rho_t = \int_{J} f_t(\cdot,u)\,du,
 \]
 satisfies the simple {\em heat equation}
 \[
  \partial_t \rho_t = \sigma\Delta_x\rho_t,\qquad \lim\nolimits_{t\searrow 0} \rho_t = \rho_0,
 \]
 which precisely describes the purely diffusive behavior of the nonlinear stochastic process in its first component, namely $\X_t$. Indeed, for feasible $\H$ and $\K$, we have
 \[
  \int_{J} \partial_u (\H'f_t - \F[f_t]f_t)\,du = (\H'f_t - \F[f_t]f_t )\Big|_{-1}^{1} = 0.
 \]
 If we consider the Wiener process $W_t$ in a bounded domain $\Omega\subset\rr^d$ with reflective boundary conditions, i.e., we allow the motion of agents only within a bounded region $\Omega$, we obtain the homogeneous Neumann boundary condition for the heat equation. In this case, the unique equilibrium for this equation is the constant $\rho_{\text{stat}}\equiv 1/|\Omega|$, i.e., the uniform distribution in the $x$-variable.
\end{model}
 
 \begin{remark}\label{rem:ornstein}
  Instead of considering a bounded domain $\Omega\subset \rr^d$ with reflecting boundary conditions for the Wiener process, one could introduce a sufficiently smooth and convex confining potential $V\colon\rr^d\to \rr$ with a sufficiently strong growth condition, and additionally $\int_{\rr^d} e^{-V/\sigma}\,dx = 1$. In this case, the mean-field spatial process becomes
  \[
   d\X_t = -\nabla_x V(\X_t)\,dt + \sqrt{2\sigma}\,dW_t,
  \]
  and the resulting Fokker--Planck equation reads
  \[
   \partial_t f_t - \partial_u (\H'f_t - \F[f_t]f_t)  = \text{div}_x(\sigma \nabla_x f_t + f_t\nabla_x V ).
  \]
  As in Model~\ref{ex:heat}, we may take the first marginal of $f_t$ to obtain
  \[
   \partial_t \rho_t = \text{div}_x(\sigma \nabla_x \rho_t + \rho_t\nabla_x V ),
  \]
  which is the Fokker--Planck equation corresponding to the {\em Ornstein--Uhlenbeck process}. Its unique stationary state is simply given by $\rho_{\text{stat}}=e^{-V/\sigma}$.
 \end{remark}
 
\begin{model}
 Disintegrating the joint probability distribution $f_t=f_t(x,u)$ into its first marginal $\rho_t$ and the corresponding conditional distribution $g_t^x$, i.e., $f_t(x,u)=g_t^x(u)\rho_t(x)$, and inserting this into the mean-field equation yields
 \begin{align}\label{eq:disintegrate}
  \partial_t g_t^x - \partial_u \Big(\H'g_t^x - \F[f_t]g_t^x\Big) = \sigma\Big( \Delta_x g_t^x + \nabla_x \ln\rho_t^2\cdot\nabla_x g_t^x\Big),
 \end{align}
 which is a closed equation for $g_t^{x}$, given $\rho_t$. In fact, if one is given a stationary spatial density $\rho_{\text{stat}}$ of the population, this can be included directly by simply setting $\rho_t=\rho_{\text{stat}}$. Notice that this equation is nonlocal in the spatial variable, unless further assumptions are made.
 
 Nevertheless, \eqref{eq:disintegrate} allows for the computation of $g_t^x$ for any given spatial distribution $\rho_t$, i.e., also those that do not necessarily satisfy the heat equation. Therefore, this macroscopic equation is capable of describing disease dynamics in spatially inhomogeneous populations, where the spatial inhomogeneity is provided by an arbitrary time dependent spatial distribution $\rho_t$.
\end{model}

\begin{model}\label{model:macro}
 A crude approximation to localize the spatial variable in \eqref{eq:disintegrate} would be to neglect the spatial derivatives on the right-hand side, and to use the product ansatz for the interaction term of the form $\K(x,u,y,\nu)=\delta_x(y)\Psi(u)\chi(\nu)$, which may be justified in the following sense. Suppose we rescale the spatial variable as $\tilde x\sim \e x$ and the density as $\tilde f_t \sim f_t(\cdot/\e)$, i.e., we assume that the spatial domain $\Omega$ is large in comparison to the range of interaction given by $\Phi$. Then, we obtain the scaled equation (dropping the tildes)
 \[
  \partial_{t} g_{t}^{x} -\partial_u \Big(\H' g_{t}^{x} -  \F_\e[f_t]g_{t}^{x}\Big) = \e^2\sigma\Big( \Delta_{x} g_{t}^{x} + \nabla_{x} \ln \rho_{t}^2\cdot\nabla_{x} g_{t}^{x}\Big),
 \]
 where
 \[
  \F_\e[f_t](x,u) = \int_{S} \e^{-d}\,\Phi(y/\e)\psi(u)\chi(\nu) f_t(x-y,\nu)\,dyd\nu.
 \]
 Now, if $\Phi$ has a form of a mollifier, then $\e^{-d}\,\Phi(\cdot/\e)$ converges towards the Dirac $\delta_0$ in distribution. By assuming $f_t$ to be sufficiently smooth, we may formally pass to the limit $\e\to 0$ to obtain
 \begin{align}\label{eq:conditional}
  \partial_t g_t^x -\partial_u \Big(\H' g_t^x - \rho_t \,\M[g_t^x]g_t^x\Big) = 0,
 \end{align}
 with the nonlocal (in the activity variable $u$) operator
 \begin{align}\label{eq:nonlocal_M}
  \M[g](u) = \psi(u)\int_{J}\chi(\nu)g(\nu)\,d\nu.
 \end{align}
 This localization procedure in the spatial variable provides a pointwise description of the activity, which, from the numerical point of view, is advantageous over the complete mean-field equation, since solving for $g_t^x$ with respect to the spatial variable $x\in\Omega$ may be carried out in a pointwise manner, independent of the activity variable $u\in J$.
\end{model}
 
\begin{model}
Assuming further that $\rho_t\equiv\rho_{\text{stat}}=1/|\Omega|$, which is the case in spatially homogeneous epidemiology models, a spatially independent model is recovered in the form
\begin{align}\label{eq:nonlocal_g}
 \partial_t g_t -\partial_u \Big(\H'g_t - \rho_{\text{stat}}\M[g_t]g_t\Big) = 0,
\end{align}
 which describes the probability distribution only in the activity variable $u\in J$. One can further extract other relevant information, such as the probability of finding agents with a certain activity set $A\in\mathcal{B}(J)$, simply given by $\int_A g_t \,du\in[0,1]$. For example, by choosing $A=\S$, we recover the probability of finding particles that are susceptible at time $t\ge 0$.
\end{model}

The last two equations \eqref{eq:conditional}, \eqref{eq:nonlocal_g} are the simplest of the macroscopic equations. Nevertheless, they sufficiently exhibit important characteristics of a basic epidemiological model modulo the spatial resolution. For this reason, we will study these equations in more detail in the next section.

\section{The nonlocal spatially homogeneous macroscopic equation}\label{sec:macro}

In this section, we provide an analytical study of the macroscopic equation
\[
 \partial_t g_t^x -\partial_u \Big(\H' g_t^x - \bar \rho(x) \,\M[g_t^x]g_t^x\Big) = 0,\qquad \lim\nolimits_{t\searrow 0} g_t^x = g_0^x,\quad x\in\Omega,
\]
where $\bar\rho$ is a given stationary smooth spatial distribution, and $\M$ is as given in \eqref{eq:nonlocal_M}. Since this equation may be solved pointwise in $x\in\Omega$, we consider the simpler variant
\begin{align}\label{eq:macro}
 \partial_t g_t -\partial_u \Big(\H' g_t - \bar \rho \,\M[g_t]g_t\Big) = 0,\qquad \lim\nolimits_{t\searrow 0} g_t = g_0,
\end{align}
where $\bar\rho$ is simply a constant, $\bar\rho >0$, and $g_0$ is an initial distribution of activity.

\subsection{Existence and uniqueness}
The well-posedness of a nonlocal continuity equation such as \eqref{eq:macro} may be found, for example, in \cite{crippa2013existence}. Nevertheless, for the convenience of the reader, we provide the principal ideas behind the solvability of the equation.

As in the standard {\em method of characteristics} for first order partial differential equations, we may derive the {\em characteristic equation} corresponding to the continuity equation \eqref{eq:macro}, which reads
\begin{align}\label{eq:characteristic}
 \frac{d}{dt}U_t(u) = -\H'(U_t(u)) + \bar\rho\M[g_t](U_t(u)),\qquad U_0(u)=u\in J.
\end{align}
One recognizes that this equation is again of the form of a nonlinear process since the flow $U_t$ depends on its density $g_t$. In fact, if $U_t$ satisfies the characteristic equation \eqref{eq:characteristic}, then its density $g_t$ may be represented by the push-forward of the flow $U_t$, i.e., $g_t = U_t\# g_0$, or equivalently
\[
 \int_J \varphi(u) dg_t = \int_J (\varphi\circ U_t)(v)\,dg_0\qquad\text{for all\, $\varphi\in \C_b(J)$}.
\]

Therefore, we define the notion of a {\em Lagrangian solution} of \eqref{eq:macro} with initial data $g_0\in\P_1(J)$  as a probability measure $g\in\C([0,T],\P_1(J))$ satisfying the push-forward formula $g_t=U_t\#g_0$ with the flow $U\in\C([0,T]\times J,J)$ satisfying \eqref{eq:characteristic}. It is known that Lagrangian solutions and weak measure solutions for \eqref{eq:macro} coincide (cf.~\cite{crippa2013existence}). The main result of this section is the following theorem.

\begin{theorem}\label{thm:macro}
 Let $T>0$ be arbitrary, $g_0\in\P_1(J)$, and $\H$ be feasible and $\psi$, $\chi\in\Lip_b(S)$. Then, there exists a unique Lagrangian solution $g\in\C([0,T],\P_1(J))$ to the equation \eqref{eq:macro}.
\end{theorem}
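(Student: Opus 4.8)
The plan is to solve \eqref{eq:macro} by a Banach fixed-point argument built directly on its Lagrangian (McKean) formulation, exploiting the fact that the nonlocal operator $\M$ couples the characteristic flow to the unknown density only through a single scalar moment. Concretely, I would work on the complete metric space $\C([0,T],\P_1(J))$ equipped with a time-weighted $W_1$ metric and define a solution map $\Gamma$ as follows: given a candidate path $g\in\C([0,T],\P_1(J))$, freeze it in the drift, solve the non-autonomous ODE
\[
 \dot U_t(u) = -\H'(U_t(u)) + \bar\rho\,\M[g_t](U_t(u)),\qquad U_0(u)=u,
\]
for the flow $U=U[g]$, and set $\Gamma(g)_t := U_t\#g_0$. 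A fixed point of $\Gamma$ is precisely a Lagrangian solution of \eqref{eq:macro}, so the theorem reduces to showing that $\Gamma$ is well defined and admits a unique fixed point.

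For well-definedness, observe that for frozen $g$ the drift is $b_t[g](u):=-\H'(u)+\bar\rho\,\M[g_t](u) = -\H'(u) + \bar\rho\,\psi(u)\,m_t[g]$, where the scalar $m_t[g]:=\int_J\chi(\nu)\,dg_t(\nu)$ satisfies $|m_t[g]|\le\|\chi\|_\infty$. Since $\H\in\Lip^1_b(J)$ gives $\H'\in\Lip_b(J)$ and $\psi\in\Lip_b$, the field $b_t[g](\cdot)$ is Lipschitz in $u$ with a constant bounded uniformly in $t$ and in $g$; the Cauchy--Lipschitz theorem then yields a unique flow $U[g]\in\C([0,T]\times J,J)$. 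Crucially, the flow preserves $J$: the feasibility of $\H$ forces $\H'(\pm1)=0$, while $\supp\psi\subset\S=(-1,u_*)$ forces $\psi(\pm1)=0$, so both endpoints are equilibria of the ODE and, by uniqueness, no trajectory can leave $[-1,1]$. Hence $U_t\#g_0\in\P_1(J)$ and $\Gamma$ maps $\C([0,T],\P_1(J))$ into itself.

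The contraction estimate is where the scalar structure pays off. Given two paths $g,h$ with flows $U,V$, the elementary Grönwall estimate for $\tfrac{d}{dt}|U_t(u)-V_t(u)|$ splits into a term controlled by the uniform Lipschitz constant of $b$ in $u$ and the driving discrepancy $|b_t[g](V_t(u))-b_t[h](V_t(u))| = \bar\rho\,|\psi(V_t(u))|\,|m_t[g]-m_t[h]|$. Since $\chi\in\Lip_b$, the moment is $W_1$-Lipschitz, $|m_t[g]-m_t[h]|\le\Lip(\chi)\,W_1(g_t,h_t)$, so that after integration and using $W_1(U_t\#g_0,V_t\#g_0)\le\sup_u|U_t(u)-V_t(u)|$ one obtains
\[
 W_1(\Gamma(g)_t,\Gamma(h)_t)\le C\int_0^t W_1(g_s,h_s)\,ds.
\]
Choosing the time-weighted norm $\sup_{t}e^{-\lambda t}W_1(g_t,h_t)$ with $\lambda$ large (equivalently, iterating the Grönwall bound), $\Gamma$ becomes a contraction on the whole interval $[0,T]$, and the Banach fixed-point theorem delivers a unique $g\in\C([0,T],\P_1(J))$ with $g_t=U_t\#g_0$; continuity in time and the $\P_1$ bound follow from the Lipschitz flow. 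As Lagrangian and weak measure solutions of \eqref{eq:macro} coincide (recalled from \cite{crippa2013existence}), this is the asserted unique Lagrangian solution.

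I expect the only genuinely delicate point to be the \emph{self-consistent coupling}: because the drift depends on the unknown law through $\M[g_t]$, one cannot solve the flow once and for all but must close the loop via the fixed-point iteration. This is tamed here precisely because $\M[g_t]=\psi(\cdot)\,m_t$ collapses the nonlocal, nonlinear feedback to a bounded, $W_1$-Lipschitz scalar; the other point requiring care is the domain invariance $U_t(J)\subset J$ (so that $\M$, and hence the whole scheme, remains meaningful), which is exactly what the feasibility hypotheses on $\H$ together with the support condition $\supp\psi\subset\S$ secure.
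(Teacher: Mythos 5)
Your proposal is correct and follows essentially the same route as the paper: the same fixed-point map $\hat g\mapsto U[\hat g]_t\#g_0$ on $\C([0,T],\P_1(J))$, the same Lipschitz/Gr\"onwall stability estimate reducing the nonlocality to the scalar moment $\int_J\chi\,dg_t$, and the same time-weighted $W_1$ metric to get a contraction on all of $[0,T]$. Your explicit verification that the flow preserves $J$ (via $\H'(\pm1)=0$ and $\psi(\pm1)=0$) is a point the paper leaves implicit, and is a worthwhile addition.
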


Its proof relies on the use of the well-known {\em Banach fixed point theorem} \cite{zeidler1993nonlinear} for complete metric spaces. For this reason, we consider the space $\C([0,T],\P_1(J))$, endowed with the distance
\[
 d(\mu,\nu) = \sup\nolimits_{t\in[0,T]} W_1(\mu_t,\nu_t),\qquad \mu,\nu\in \C([0,T],\P_1(J)),
\]
where $W_1$ denotes the 1-Wasserstein distance, given by
\[
 W_1(\mu,\nu) = \inf\nolimits_{\pi\in\Pi(\mu,\nu)}\iint_{J\times J} |x-y|\,d\pi(x,y),\qquad \mu,\nu\in\P_1(J).
\]
Here $\Pi(\mu,\nu)$ denotes the set of all measures $\pi$ with marginals $\pi(\cdot,J)=\mu$ and $\pi(J,\cdot)=\nu$. It is known that the 1-Wasserstein distance metricizes the narrow convergence in $\P_1(J)$, which makes $(\P_1(J),W_1)$ a separable complete metric space, since $J$ is complete \cite{villani2008optimal}. Consequently, the function space $\C([0,T],\P_1(J))$ endowed with the distance $d$ above is also a separable complete metric space.

Now consider, for any given $\hat g\in \C([0,T],\P_1(J))$, the auxiliary problem 
\begin{align}\label{eq:aux}
 \frac{d}{dt}U_t(u) = -\H'(U_t(u)) + \bar\rho\M[\hat g_t](U_t(u)),\qquad U_0(u)=u\in J.
\end{align}
It is easy to see that the right-hand side of the equation is continuous in the temporal variable, and globally Lipschitz-continuous in the activity variable for any feasible functions $\H$, $\psi$ and $\chi$. Therefore, the {\em Picard--Lindel\"of theorem}, or similarly, the {\em Cauchy--Lipschitz theorem}, provides a unique global solution $U_{\cdot}(u)\in \C([0,T],J)$ for any $u\in J$, and thereby a flow $U\in \C([0,T]\times J,J)$. We then construct a new probability measure $g\in \C([0,T],\P_1(J))$ by means of push-forward, i.e., $g_t = U_t\# g_0$, where $g_0\equiv \hat g_0\in \P_1(J)$.

Consequently, this induces a mapping $\T\colon\C([0,T],\P_1(J))\to \C([0,T],\P_1(J))$, $\hat g\mapsto g$, which we show to admit a fixed point satisfying the nonlocal continuity equation \eqref{eq:macro}. Before proceeding with the proof of Theorem~\ref{thm:macro}, we provide a stability estimate that will assist in showing the required contracting property of the mapping $\T$.

\begin{lemma}\label{lem:stability}
 Let $\hat g,\hat h\in \C([0,T],\P_1(J))$ be given and $g$, $h$ be Lagrangian solutions to
 \[
  \partial_t g_t -\partial_u \Big(\H' g_t - \bar \rho \,\M[\hat g_t]g_t\Big) = 0,\qquad \partial_t h_t -\partial_u \Big(\H' h_t - \bar \rho \,\M[\hat h_t]h_t\Big) = 0,
 \]
 with initial conditions $g_0=\hat g_0$ and $h_0=\hat h_0$ in $\P_1(J)$, respectively. Then the estimate
 \[
  W_1(g_t,h_t) \le \left(W_1(g_0,h_0) + c_2 \int_0^t W_1(\hat g_s,\hat h_s)\,ds\right)e^{c_1 t}\qquad \text{for all\, $t\ge 0$},
 \]
 holds true with positive constants $c_1,c_2$, depending only on $\bar\rho$, $\H$, $\psi$ and $\chi$.
\end{lemma}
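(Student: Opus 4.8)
The plan is to exploit the Lagrangian (push-forward) representation of both solutions and estimate the Wasserstein distance by tracking the separation between the two characteristic flows. Let $U_t$ be the flow generated by the auxiliary equation \eqref{eq:aux} driven by $\hat g$, and let $V_t$ be the flow driven by $\hat h$, so that $g_t = U_t\#g_0$ and $h_t = V_t\#h_0$. I would first construct a convenient coupling: take an optimal transport plan $\pi_0\in\Pi(g_0,h_0)$ realizing $W_1(g_0,h_0)$, and push it forward by the map $(u,v)\mapsto(U_t(u),V_t(v))$. This yields a plan in $\Pi(g_t,h_t)$, so that
\[
 W_1(g_t,h_t) \le \int_{J\times J} |U_t(u) - V_t(v)|\,d\pi_0(u,v).
\]
The task is therefore reduced to a pointwise estimate on $|U_t(u) - V_t(v)|$ along paired characteristics.

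\emph{Gr\"onwall estimate on the flows.} For fixed $(u,v)$, I would differentiate the difference $D_t := U_t(u) - V_t(v)$ in time using \eqref{eq:aux}, giving
\[
 \frac{d}{dt}D_t = -\bigl(\H'(U_t(u)) - \H'(V_t(v))\bigr) + \bar\rho\bigl(\M[\hat g_t](U_t(u)) - \M[\hat h_t](V_t(v))\bigr).
\]
The $\H'$ term is controlled by the Lipschitz bound on $\H'$ (since $\H\in\Lip^1_b(J)$), contributing a factor proportional to $|D_t|$. For the $\M$ term I would split it through the intermediate quantity $\M[\hat g_t](V_t(v))$ into two pieces: the difference $\M[\hat g_t](U_t(u)) - \M[\hat g_t](V_t(v))$, which is again Lipschitz in the spatial argument (controlled by $|D_t|$ via the Lipschitz bounds on $\psi$, $\chi$, and the uniform mass bound on $\hat g_t$), and the difference $\M[\hat g_t](V_t(v)) - \M[\hat h_t](V_t(v))$, which depends only on the data $\hat g_t$ versus $\hat h_t$. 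Recalling the definition \eqref{eq:nonlocal_M},
\[
 \M[\hat g_t](w) - \M[\hat h_t](w) = \psi(w)\int_J \chi(\nu)\,\bigl(d\hat g_t(\nu) - d\hat h_t(\nu)\bigr),
\]
and since $\chi\in\Lip_b(J)$ this last integral is bounded by $\|\chi\|_{\Lip}\,W_1(\hat g_t,\hat h_t)$ by the Kantorovich--Rubinstein duality for $W_1$; together with the bound on $\psi$ this gives a contribution of order $W_1(\hat g_t,\hat h_t)$, independent of $(u,v)$.

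\emph{Conclusion.} Collecting the estimates yields a differential inequality of the form
\[
 \frac{d}{dt}|D_t| \le c_1\,|D_t| + c_2\,W_1(\hat g_t,\hat h_t),
\]
with constants $c_1,c_2$ depending only on $\bar\rho$, the Lipschitz and sup bounds of $\H'$, $\psi$, $\chi$. Applying Gr\"onwall's inequality pointwise gives
\[
 |D_t| \le \Bigl(|D_0| + c_2\int_0^t W_1(\hat g_s,\hat h_s)\,ds\Bigr)e^{c_1 t},
\]
and integrating this against $\pi_0$ (noting $\int |D_0|\,d\pi_0 = W_1(g_0,h_0)$ and that the driving term is independent of $(u,v)$) produces the claimed bound on $W_1(g_t,h_t)$.

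The main obstacle I anticipate is the careful bookkeeping in the telescoping of the $\M$ term: one must cleanly separate the part that is Lipschitz in the \emph{evaluation point} (feeding into $c_1$) from the part measuring the distance between the \emph{driving densities} $\hat g$ and $\hat h$ (feeding into the $c_2$-term), and invoke Kantorovich--Rubinstein duality only on the latter. A subtlety worth verifying is that the mass $\int_J\chi(\nu)\,d\hat g_t(\nu)$ stays uniformly bounded in $t$, which follows from $\chi\in\Lip_b(J)$ together with $\hat g_t\in\P_1(J)$ being a probability measure; this uniform bound is what keeps $c_1$ finite.
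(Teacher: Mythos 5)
Your proposal is correct and follows essentially the same route as the paper: push forward an optimal coupling $\pi_0$ by the paired flows, telescope the $\M$-term through an intermediate evaluation point so that one piece is Lipschitz in the flow difference and the other is controlled by $W_1(\hat g_t,\hat h_t)$ (the paper uses an explicit coupling of $\hat g_s$ and $\hat h_s$ where you invoke Kantorovich--Rubinstein, which is equivalent), and close with Gr\"onwall. The only cosmetic difference is that you apply Gr\"onwall pointwise along characteristics before integrating against $\pi_0$, whereas the paper integrates first and applies Gr\"onwall to the coupled integral; both yield the stated bound.
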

\begin{proof}
 We first note that $g_t=U_t\#g_0$ and $h_t=V_t\# h_0$, where $U,V\in\C([0,T]\times J,J)$ satisfy
 \begin{align*}
  U_t(u) &= u -\int_0^t \H'(U_s(u)) - \bar\rho\M[\hat g_s](U_s(u))\,ds,\\
  V_t(u) &= v -\int_0^t \H'(V_s(v)) - \bar\rho\M[\hat h_s](V_s(v))\,ds,
 \end{align*}
 respectively. Now let $\pi_0\in \Pi(g_0,h_0)$ be an optimal coupling of $g_0$ and $h_0$, and $\pi_t = (U_t,V_t)\#\pi_0$. Then $\pi_t\in \Pi(U_t\#g_0,V_t\#h_0)=\Pi(g_t,h_t)$, which is not necessarily optimal. For $\pi_t$, we have
 \begin{align*}
  W_1(g_t,h_t) &\le \iint_{J\times J} |u-v|\,d\pi_t(u,v) = \iint_{J\times J} |U_t(u)-V_t(v)|\,d\pi_0(\hat u,\hat v) \\
  &\le \iint_{J\times J} |u-v|\,d\pi_0(u,v) + \int_0^t\iint_{J\times J}  |\H'(U_s(u))-\H'(V_s(v))|\,d\pi_0(u,v)\,ds \\
  &\hspace*{6em}+ \bar\rho\int_0^t\iint_{J\times J}  |\M[\hat g_s](U_s(u))-\M[\hat h_s](V_s(v))|\,d\pi_0(u,v)\,ds \\
  &= W_1(h_0,g_0) + I_1 + I_2.
 \end{align*}
 To estimate $I_1$, we simply use the Lipschitz-continuity of $\H'$ to obtain
 \begin{align*}
  I_1 \le L_{\H'}\int_0^t \iint_{J\times J} |U_s(u)-V_s(v)|\,d\pi_0(u,v)\,ds = L_{\H'}\int_0^t \iint_{J\times J} |u-v|\,d\pi_s(u,v)\,ds.
 \end{align*}
 Similarly, we use the Lipschitz-continuity of $\psi$ to obtain
 \begin{align*}
  I_2 &= \bar\rho\int_0^t\iint_{J\times J}  \left|\psi(U_s(u))\left(\int_J \chi(\hat u)\, d\hat g_s(\hat u)\right) -\psi(V_s(v))\left(\int_J \chi(\hat v)\, d\hat h_s(\hat v)\right)\right|\,d\pi_0(u,v)\,ds \\
  &\le \bar\rho L_\psi\|\chi\|_\infty\int_0^t\iint_{J\times J}  |U_s(u)-V_s(v)|\,d\pi_0(u,v)\,ds\\
  &\hspace*{12em}+\bar\rho\|\psi\|_\infty \int_0^t \left|\int_J \chi(\hat u)\, d\hat g_s(\hat u)-\int_J \chi(\hat v)\, d\hat h_s(\hat v)\right|\,ds,
 \end{align*}
 where we used the fact that $\psi$ and $\chi$ are bounded, and $\hat g_t$ and $\pi$ are a probability measures over $J$ and $J\times J$, respectively. Concerning the last term, we estimate further to obtain
 \begin{align*}
  \left|\int_J \chi(\hat u)\, d\hat g_s(\hat u)-\int_J \chi(\hat v)\, d\hat h_s(\hat v)\right| &\le \iint_{J\times J} |\chi(\hat u)-\chi(\hat v)|\, d(\hat g_s \otimes \hat h_s)(\hat u,\hat v) \\
  &\le L_\chi \iint_{J\times J} |\hat u-\hat v| d(\hat g_s \otimes \hat h_s)(\hat u,\hat v).
 \end{align*}
 Putting all the terms together yields
 \begin{align*}
  W_1(g_t,h_t) &\le W_1(h_0,g_0) + \big(L_{\H'}+\bar\rho L_\psi\|\chi\|_\infty\big)\int_0^t \iint_{J\times J} |u-v|\,d\pi_s(u,v)\,ds \\
  &\hspace*{12em}+ \bar\rho L_\chi\|\psi\|_\infty \int_0^t \iint_{J\times J} |u-v|d(\hat g_s\otimes \hat h_s)(u,v)\,ds.
 \end{align*}
 Optimizing the right-hand side over all possible couplings in $\Pi(g_s,h_s)$ and $\Pi(\hat g_s,\hat g_s)$ gives
 \[
  W_1(g_t,h_t) \le W_1(g_0,h_0) + c_1\int_0^t W_1(g_s,h_s)\,ds + c_2 \int_0^t W_1(\hat g_s,\hat h_s)\,ds,
 \]
 with $c_1 = L_{\H'}+\bar\rho L_\psi\|\chi\|_\infty$ and $c_2 = \bar\rho L_\chi\|\psi\|_\infty$. From {\em Gronwall's inequality}, we finally obtain
 \[
  W_1(g_t,h_t) \le \left(W_1(g_0,h_0) + c_2 \int_0^t W_1(\hat g_s,\hat h_s)\,ds\right)e^{c_1 t},
 \]
 which completes the proof.
\end{proof}

We now have all the ingredients necessary to complete the proof of Theorem~\ref{thm:macro}.

\begin{proof}[Proof of Theorem~\ref{thm:macro}]
 We consider the mapping $\T\colon \C([0,T],\P_1(J))\to \C([0,T],\P_1(J))$ as discussed above. However, we consider a weighted metric of the form
 \[
  d_\lambda(g,h) = \sup\nolimits_{t\in[0,T]} e^{-\lambda t}W_1(g_t,h_t),
 \]
 which is clearly equivalent to the usual metric $d$, for any $\lambda>0$. Therefore, the space $\C([0,T],\P_1(J))$ endowed with the metric $d_\lambda$ is again a separable complete metric space.
 
 Now let $g=\T(\hat g)$ and $h=\T(\hat h)$, with $g_0=h_0$. Then Lemma~\ref{lem:stability} provides the estimate
 \begin{align*}
  W_1(g_t,h_t) &= c_2 e^{c_1 t} \int_0^t W_1(\hat g_s,\hat h_s)\,ds \le (c_2e^{c_1 t}/\lambda) (e^{\lambda t}-1)d_\lambda(\hat g,\hat h).
 \end{align*}
 Multiplying both sides by $\exp(-\lambda t)$ and taking the supremum over time $t\in[0,T]$ yields
 \[
  d_\lambda(\T(\hat g),\T(\hat h))\le (c_2e^{c_1 T}/\lambda) d_\lambda(\hat g,\hat h).
 \]
 Therefore, choosing $\lambda > c_2\exp(c_1 T)$ makes $\T$ a contraction mapping with respect to the metric $d_\lambda$. Finally, we invoke the Banach fixed point theorem to obtain a unique fixed point in the space $\C([0,T],\P_1(J))$, which satisfies the nonlocal macroscopic equation \eqref{eq:macro}.
\end{proof}

In fact, one can further show that if the initial measure $g_0\in\P_1^{\text{ent}}(J)$, where $\P_1^{\text{ent}}(J)$ denotes the space of probability measures with finite first moment that are, additionally, absolutely continuous with respect to the Lebesgue measure and have finite entropy
\[
 0\le \text{Ent}(g)=\int_J \big(g\log(g)-g+ 1 \big)du < \infty,\qquad g\in \P_1^{\text{ent}}(J),
\]
then $g_t\in\P_1^{\text{ent}}(J)$ for all times $t\ge 0$. Indeed, assuming $\H$, $\psi$ and $\chi$ to be feasible, then
\begin{align*}
 \frac{d}{dt}\text{Ent}(g_t)
 &= -\int_J \partial_u g_t \Big(\H'(u) - \bar\rho\M[g_t]\Big)du = \int_J g_t\Big(\H''(u) - \bar\rho\partial_u\M[g_t]\Big)du \\
 &\le \big(1+\bar\rho\|\chi\|_\infty\big)\text{Ent}(g_t) + \int_J \Big(e^{\H''(u)}-1\Big) du + \bar\rho\|\chi\|_\infty\int_J \Big(e^{\psi'(u)}-1\Big)du \\
 &= c_1\text{Ent}(g_t) + c_2,
\end{align*}
where we integrated by parts in the first two equalities, and applied Young's inequality of the form $ab \le e^{a} + b\ln(b)-b$ for $a,b\in\rr$, $b\ge 0$ in the inequality. Equivalently, we have in integral form
\[
 \text{Ent}(g_t) \le (\text{Ent}(g_0) + c_2 t) + c_1\int_0^t \text{Ent}(g_s)\,ds.
\]
A simple application of the Gronwall inequality leads to the estimate
\[
 \text{Ent}(g_t) \le (\text{Ent}(g_0) + c_2 t)e^{c_1 t},
\]
which shows that $g_t\in \P_1^{\text{ent}}(J)$ for all times $t\ge 0$ as asserted. Summarizing, we have

\begin{proposition}
 Let $\H$ be feasible, $\psi$, $\chi\in\Lip_b(S)$ and $g_0\in \P_1^{\text{ent}}(J)$. Then $g_t\in \P_1^{\text{ent}}(J)$, $t\ge 0$.
\end{proposition}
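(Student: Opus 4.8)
The plan is to verify the three defining properties of $\P_1^{\text{ent}}(J)$ are inherited by $g_t$. Finiteness of the first moment is immediate since $J=[-1,1]$ is bounded, so the work reduces to propagating (i) absolute continuity with respect to Lebesgue measure and (ii) finiteness of the entropy. For both I would exploit the Lagrangian representation $g_t=U_t\#g_0$ furnished by Theorem~\ref{thm:macro}, where $U\in\C([0,T]\times J,J)$ solves the characteristic equation \eqref{eq:characteristic}. The key structural fact is that, for feasible $\H$ and $\psi,\chi\in\Lip_b(S)$, the velocity $b(u):=-\H'(u)+\bar\rho\M[g_t](u)$ in \eqref{eq:characteristic} is globally Lipschitz, with a Lipschitz constant $L$ uniform in $t\in[0,T]$.

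For (i), I would argue that each flow map $U_t\colon J\to J$ is bi-Lipschitz. The forward flow is Lipschitz by a Gronwall estimate on the ODE, and its inverse is the flow of the time-reversed field, which obeys the same bound; moreover, by uniqueness of trajectories, $u\mapsto U_t(u)$ is strictly increasing. Hence $U_t$ is a bi-Lipschitz order isomorphism onto its image, with $U_t$ and $U_t^{-1}$ having Lipschitz constants at most $e^{Lt}$. Since bi-Lipschitz maps send Lebesgue-null sets to Lebesgue-null sets, the push-forward $g_t=U_t\#g_0$ of an absolutely continuous measure is again absolutely continuous, which settles (i). Note this does not require $U_t$ to be onto $J$.

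For (ii), the cleanest route avoids the PDE and uses the change of variables directly. Writing $j_t(u):=\partial_u U_t(u)$, the bi-Lipschitz bounds give $e^{-Lt}\le j_t(u)\le e^{Lt}$ for a.e.\ $u$, hence $|\log j_t|\le Lt$. Applying the change of variables $w=U_t(u)$ to $\int g_t\log g_t$ and keeping track of the $-g+1$ terms yields the exact identity
\[
 \text{Ent}(g_t)=\text{Ent}(g_0)-\int_J g_0(u)\,\log j_t(u)\,du,
\]
so that $\text{Ent}(g_t)\le \text{Ent}(g_0)+Lt<\infty$ for all $t\in[0,T]$, using $\int_J g_0\,du=1$. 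Alternatively, one may reproduce the Eulerian computation already sketched in the text: differentiate $\text{Ent}(g_t)=\int_J(g_t\log g_t-g_t+1)\,du$ in time, substitute $\partial_t g_t=\partial_u(\H'g_t-\bar\rho\M[g_t]g_t)$, integrate by parts twice to reach $\tfrac{d}{dt}\text{Ent}(g_t)=\int_J g_t(\H''-\bar\rho\,\partial_u\M[g_t])\,du$, and then apply Young's inequality $ab\le e^a+b\log b-b$ (with $b=g_t$, $a=\H''$ and $a=\psi'$ for the term from $\partial_u\M[g_t]=\psi'\int_J\chi g_t$), together with $\int_J\chi g_t\le\|\chi\|_\infty$ and the a.e.\ boundedness of $\H''$ and $\psi'$. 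This produces the differential inequality $\tfrac{d}{dt}\text{Ent}(g_t)\le c_1\text{Ent}(g_t)+c_2$ with finite constants, after which Gronwall's inequality closes the estimate.

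The main obstacle is rigour in the Eulerian route rather than the content of the inequality: the differentiation under the integral and, above all, the two integrations by parts are only formal when $g_t$ is merely absolutely continuous, since $\log g_t$ may be unbounded where $g_t$ vanishes and $\partial_u g_t$ is only an $L^1$-function, while the boundary terms at $u=\pm1$ need separate control. I would either regularize (replace $g\log g$ by a smooth convex approximation such as $(g+\delta)\log(g+\delta)$, derive the inequality with $\delta$-uniform constants, and pass to the limit using lower semicontinuity of $\text{Ent}$ against the uniform a priori bound), or simply adopt the Lagrangian identity above, which sidesteps every integrability and boundary issue because it rests only on the bi-Lipschitz bounds for the flow.
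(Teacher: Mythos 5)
Your proposal is correct, and your primary (Lagrangian) argument is genuinely different from the paper's. The paper proves the entropy bound by an Eulerian computation: differentiate $\text{Ent}(g_t)$ in time, insert the equation, integrate by parts twice to reach $\frac{d}{dt}\text{Ent}(g_t)=\int_J g_t\bigl(\H''-\bar\rho\,\partial_u\M[g_t]\bigr)\,du$, apply the Young-type inequality $ab\le e^a+b\ln b-b$, and close with Gronwall to get $\text{Ent}(g_t)\le(\text{Ent}(g_0)+c_2t)e^{c_1t}$ --- exactly the ``alternative'' route you sketch, including the same inequality and constants. Your flow-based argument instead exploits that the characteristic velocity is uniformly Lipschitz, so $U_t$ is an increasing bi-Lipschitz map with Jacobian $j_t\in[e^{-Lt},e^{Lt}]$ a.e., yielding the exact identity $\text{Ent}(g_t)=\text{Ent}(g_0)-\int_J g_0\log j_t\,du$ and hence the sharper, linear-in-time bound $\text{Ent}(g_t)\le\text{Ent}(g_0)+Lt$. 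This buys you two things the paper's sketch does not deliver: (i) an explicit proof that absolute continuity propagates (the paper tacitly assumes $g_t$ has a density when writing $\partial_u g_t$ and the boundary terms), and (ii) immunity to the rigour issues you correctly identify in the Eulerian computation --- differentiation under the integral, the two integrations by parts with $\partial_u g_t$ only in $L^1$, unboundedness of $\log g_t$ near the zero set, and the boundary terms at $u=\pm1$ --- which in the paper's version would indeed require a regularization of $s\mapsto s\log s$ and a lower-semicontinuity passage of the kind you describe. The only points to state carefully in a full write-up are that $j_t=\partial_uU_t$ exists a.e.\ by Rademacher (the variational equation is only formal for a Lipschitz field, but the two-sided Gronwall bounds on $U_t$ and its inverse suffice), and that the $-g+1$ terms cancel in the change of variables because both measures have unit mass on the same interval $J$; both are exactly as you indicate.
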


\subsection{Stationary measures and transitions}\label{subsection:stationary}
Here, we would like to explore the possible stationary states of the nonlocal macroscopic equation \eqref{eq:macro} and provide an expression similar to the classical SIR model in order to determine the occurrence of an epidemic, or otherwise. 

As noted in Remark~\ref{rem:sus}, any agent that begins with the state $u=-1$ remains there for all times. Therefore, we expect $\delta_{-1}$ to be a natural stationary measure for \eqref{eq:macro}. In fact, it is not difficult to see that, if $\H$ is feasible and $\supp(\psi)\cap\supp(\chi)=\emptyset$, then $\delta_{u_*}$ and $\delta_{1}$ are also stationary measures. Indeed, since every stationary measure should satisfy
\[
 \int_J \left(\H'(u) - \bar\rho\M[g_\infty]\right) \partial_u\varphi (u)\,dg_\infty=0,\qquad\text{for all\, $\varphi\in\C_b(J)$},
\]
we simply substitute $g_\infty=\delta_\sigma$, $\sigma\in\{-1,u_*,1\}$ into the equation and use the fact that $\H'(\sigma)=0$, $\supp(\psi)\cap\supp(\chi)=\emptyset$, to verify its stationarity. The following result classifies all stable stationary states whenever $\H$ attains strict local minima at $u=-1$ and $u=1$.

\begin{theorem}\label{thm:stable}
 Let $\H$ be feasible, where $\H$ has strict local minima at $u\in\{-1,1\}$ and $\psi$, $\chi\in\Lip_b(S)$ with $\supp(\psi)\cap\supp(\chi)=\emptyset$. Then every measure of the form
 \[
  g_\infty = (1-\alpha)\delta_{-1} + \alpha\delta_1,\qquad \alpha\in[0,1],
 \]
 are stable stationary states of the nonlocal macroscopic equation \eqref{eq:macro}.
\end{theorem}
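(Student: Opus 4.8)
The plan is to split the statement into its two assertions — that each $g_\infty=(1-\alpha)\delta_{-1}+\alpha\delta_1$ is stationary, and that it is stable — with essentially all of the work residing in the second. For stationarity I would proceed by direct substitution into the weak stationarity identity, exactly as in the paragraph preceding the theorem. The decisive point is that the nonlocal coupling is switched \emph{off} at $g_\infty$: since $\supp(\chi)\subset\I=(u_*,\bar u)$ we have $\chi(-1)=\chi(1)=0$, so the infection strength $\int_J\chi\,dg_\infty=(1-\alpha)\chi(-1)+\alpha\chi(1)=0$ vanishes and hence $\M[g_\infty]\equiv0$ identically. Together with $\H'(\pm1)=0$ (as already used above for $\delta_{-1},\delta_{u_*},\delta_1$), the integrand $\H'-\bar\rho\M[g_\infty]$ vanishes $g_\infty$-almost everywhere, so $g_\infty$ is stationary and is a genuine fixed point of the characteristic flow \eqref{eq:characteristic}.

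For stability I would establish Lyapunov stability in the $W_1$-metric, i.e.\ that for every $\e>0$ there is $\delta>0$ with $W_1(g_0,g_\infty)<\delta\Rightarrow\sup_{t\ge0}W_1(g_t,g_\infty)<\e$. One should first note that the directed, non-symmetric S$\to$I$\to$R coupling (with $\supp(\psi)\cap\supp(\chi)=\emptyset$) does \emph{not} endow \eqref{eq:macro} with a gradient/free-energy structure, so no monotone Lyapunov functional is available off the shelf; instead I would analyse the Lagrangian flow $U_t$ directly. The guiding idea is that the scalar infection strength $i(t):=\int_J\chi\,dg_t$ plays the role of a Lyapunov quantity that stays small for all time. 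Since $\chi$ is Lipschitz and $\M[g_\infty]=0$, closeness of the data gives $i(0)\le L_\chi\,W_1(g_0,g_\infty)$; and because $\I$ is at a positive distance from $\{-1,1\}$, the transport estimate also bounds the initial infectious mass $g_0(\I)$ by a constant multiple of $W_1(g_0,g_\infty)$.

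The core is a \emph{no-crossing (barrier) property} at the separatrix $u_*$. Writing the drift in \eqref{eq:characteristic} as $-\H'(u)+\bar\rho\,\psi(u)\,i(t)$, I would argue that a characteristic cannot pass from the susceptible side into $\I$: on a left neighbourhood of $u_*$ the perturbation $\bar\rho\,\psi\,i$ vanishes (since $\supp(\psi)\subset\S$ is separated from $u_*$) or is dominated by the restoring term, while $\H$ increases toward its maximum $u_*$, so the drift points back toward $-1$; at $u_*$ itself the drift is $-\H'(u_*)+\bar\rho\,\psi(u_*)\,i=0$, forbidding a rightward crossing. On $\I$ one has $\psi\equiv0$ and $\H$ decreasing, so the flow is purely rightward toward $\R$ and the sink at $1$, while the feasibility condition $\psi(-1)=0$ together with the strict minimum at $-1$ confines susceptible mass near $-1$. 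Consequently characteristics starting left of $u_*$ stay there and relax to $-1$, those starting right of $u_*$ relax to $1$, and the infectious mass is non-increasing, $g_t(\I)\le g_0(\I)$. This yields the uniform-in-time bound $i(t)\le\|\chi\|_\infty\,g_0(\I)$, so the nonlocal term remains uniformly small and the dynamics stays a small perturbation of the decoupled gradient flow of $\H$.

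Finally I would assemble the $W_1$ estimate: the $O(\delta)$ of mass lying away from $\{-1,1\}$ contributes at most $O(\delta)$ to $W_1(g_t,g_\infty)$; the mass near each minimum is held there by the restoring drift (the perturbation being small, and vanishing at $-1$); and at most $O(\delta)$ of mass can initially lie on the ``wrong'' side of $u_*$ and migrate to the other minimum, shifting the weight $\alpha$ by $O(\delta)$. Choosing $\delta$ small then gives $\sup_t W_1(g_t,g_\infty)<\e$. The step I expect to be the main obstacle is precisely the self-consistent feedback behind the barrier property: the no-crossing estimate requires $i(t)$ small, yet the smallness of $i(t)$ is itself deduced from no-crossing. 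I would close this loop by a continuation/bootstrap argument — letting $\tau=\sup\{t:\ i(s)\le\eta\ \forall s\le t\}$ and showing the derived bound $i(t)\le\|\chi\|_\infty g_0(\I)$ lies strictly below $\eta$ for small $\delta$, so that $\{t:i(s)\le\eta\,\forall s\le t\}$ is both open and closed in $[0,\infty)$. The degenerate geometry near the saddle $u_*$ and near the minima (where $\H'$ and $\psi$ vanish simultaneously) is the delicate point, and it is exactly here that the feasibility and support hypotheses on $\H$, $\psi$ and $\chi$ must be used.
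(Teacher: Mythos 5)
Your proposal is correct in outline but takes a genuinely different --- and considerably more ambitious --- route than the paper. The paper does not attempt Lyapunov stability for arbitrary $W_1$-small data: it perturbs $g_\infty$ only by mollification, $g_\infty^\e=\varphi_\e\star g_\infty$, with $\e$ so small that $\supp(g_\infty^\e)$ lies inside neighbourhoods $B_{-1}\subset\S$ and $B_1\subset\R$ of the strict minima on which $\H$ is strictly convex. These neighbourhoods are disjoint from $\supp(\chi)$, so a short invariance argument along the characteristics gives $\M[g_t]\equiv 0$ for all $t$; the dynamics then collapses to the decoupled gradient flow $\dot U=-\H'(U)$, and convergence $g_t\to g_\infty$ (back to the \emph{same} $g_\infty$) follows. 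In other words, the paper engineers the perturbation so that no infectious mass is ever present and never has to confront the nonlocal coupling. Your argument instead admits perturbations placing $O(\delta)$ of mass inside $\supp(\chi)$, which is exactly where the difficulty you identify lives: the barrier at $u_*$, the uniform bound $i(t)\le\|\chi\|_\infty\,g_0(\I)$, and the bootstrap closing their circular dependence. This programme is viable and buys a strictly stronger conclusion (genuine $W_1$-Lyapunov stability of the family, with the limit possibly a different member $(1-\alpha')\delta_{-1}+\alpha'\delta_1$), at the price of two steps that must be made quantitative: (i) the no-crossing property needs $\inf\{\H'(u):u\in\supp(\psi)\}>0$ together with $-\H'<0$ on the gap $(\max\supp(\psi),u_*)$, which uses the compact containment $\supp(\psi)\subset\S$ and sign information on $\H'$ over $(-1,u_*)$ --- neither of which is literally among the theorem's hypotheses (the paper's own proof likewise imports $\supp(\chi)\subset\I$ from the modelling section, so this is not a defect relative to the paper); and (ii) the monotonicity $g_t(\I)\le g_0(\I)$ requires ruling out backflow into $\supp(\chi)$ from the right, using $\psi\equiv 0$ and $\H'<0$ on $\I\cup\R$. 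Neither point is a flaw so much as detail left implicit; your sketch correctly isolates the bootstrap and the degeneracy near $u_*$ as the crux, whereas the paper's proof avoids them entirely by restricting the class of perturbations.
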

\begin{proof}
 To show the stability of $g_\infty$, we consider a perturbed measure of $g_\infty$ as initial condition and show that the solution $g_t$ of \eqref{eq:macro} converges towards $g_\infty$ as $t\to\infty$. More precisely, we consider the initial condition of the form
 \[
  g_\infty^\e = \varphi_\e \star g_\infty,
 \]
 with $\varphi_\e=\e^{-1}\varphi(u/\e)$, where $\varphi$ is any smooth positive symmetric mollifier. In order to determine $\e>0$ appropriately, we first establish neighborhoods $B_{-1}\subset \S$ and $B_1\subset \R$ around $u=-1$ and $u=1$, respectively, where $\H$ is strictly convex. Such neighborhoods exists since $\H$ has strict local minima at $u\in\{-1,1\}$. Consequently, we choose $\e>0$ such that $\supp(g_\infty^\e)\subset B_{-1}\cup B_1$.
 
 Now consider the Lagrangian solution $g$ corresponding to \eqref{eq:macro}, or equivalently,
 \[
  \frac{d}{dt}U_t(u) = -\H'(U_t(u)) + \bar\rho\M[g_t](U_t(u))
 \]
 with $u\in \supp(g_\infty^\e)$. Since $\S$ and $\R$ are disjoint sets, we may consider first $u\in \supp(g_\infty^\e)\cap \S\subset B_{-1}$. In this case, we have that
 \[
  \frac{d}{dt}U_t(u)\Big|_{t=0} = -\H'(u) + \bar\rho\M[g_0](u) = -\H'(u) < 0,
 \]
 which says that $U_t(u)$ remains in $B_{-1}$ for sufficiently small $t>0$, due to continuity. Analogously, we can show that $U_t(u)\in B_1$ for any $u\in \supp(g_\infty^\e)\cap \R\subset B_{1}$ when $t>0$ is sufficiently small. Therefore, the support of $g_t$ is contained within $B_{-1}\cup B_{1}$ for $t>0$ sufficiently small. By iterating this argument along the flow $U_t$, we have that $\supp(g_t)\subset B_{-1}\cup B_1$ for all times $t\ge 0$. Equivalently, we have that $U_t(u)\in B_1\cup B_{-1}$ for all $t\ge 0$, for any $u\in \supp(g_\infty^\e)$. 
 
 The previous discussion implies that $\M[g_t](v)=0$ for any $v\in J$, and hence
 \[
 \frac{d}{dt}U_t(u) = -\H'(U_t(u)),\qquad u\in \supp(g_\infty^\e).
 \]
 Taking the time derivative of $\H$ along the flow $U_t(u)$ yields
 \[
  \frac{d}{dt}\H(U_t(u)) = \H'(U_t(u))\frac{d}{dt}U_t(u) = -|\H'(U_t(u))|^2 < 0\qquad\text{for all\, $t\ge 0$},
 \]
 which says that the flow $U_t(u)$ minimizes $\H$ with time. Since $\H$ is strictly convex in $B_{-1}\cup B_1$, we have that $\H'(v)\ne 0$ for any $v\in B_{-1}\cup B_1$, $v\notin \{-1,1\}$. Thus,
 \[
  U_t(u) \longrightarrow \begin{cases}
   -1, & \text{for\, $u\in \supp(g_\infty^\e)\cap \S$} \\
   \phantom{-}1, & \text{for\, $u\in \supp(g_\infty^\e)\cap \R$}
  \end{cases}\qquad\text{as\, $t\to \infty$}.
 \]
 Note that $\supp(g_\infty^\e)\cap \S$ and $\supp(g_\infty^\e)\cap \R$ are disjoint, and so the mass of $g_t$ within $B_{-1}$ is $(1-\alpha)$, and $B_1$ is $\alpha$, due to conservation of mass. Consequently $g_t\to g_\infty$ in distribution as $t\to \infty$.
\end{proof}

\begin{remark}
 One easily verifies that stable stationary states for the mean-field equation \eqref{eq:fp} may be identified with the measure $f_\infty(dx,du)=\rho_{\text{stat}}(dx)\otimes g_\infty(du)$.
\end{remark}

We now proceed to derive an equivalent expression for the {\em basic reproduction number} $\mathfrak{R}_0$ present in the classical SIR model, which determines if a disease leads to an epidemic or otherwise. For the classical SIR model, the basic reproduction number $\mathfrak{R}_0$ is given by the formula $\mathfrak{R}_0=\beta S_0/\gamma$, where $\beta>0$ is the transmission rate, $\gamma$ the recovery rate, and $S_0$ the initial susceptible population. 

To provide a correspondence between the nonlocal macroscopic model \eqref{eq:macro} and the SIR model, we make simplifying assumptions on $\H$, $\psi$ and $\chi$. More precisely, we assume that
\begin{align}\label{eq:simplify}
 \psi=\1_\S^\e,\qquad \chi = c_\chi\1_\I^\e,\qquad \H' = \lambda\1_\S^\e -\gamma\1_\I^\e,
\end{align}
where $c_\chi, \lambda,\gamma$ are positive constants, and $\1_A^\e$ are mollified versions of the indicator function over a Borel set $A\subset J$ with $\supp(\1_A^\e)\subset A$. We further assume that $\bar u=1$, i.e., $\R=\{1\}$.

\begin{definition}
 We define the {\em effective transition} from the class of susceptible agents $\S$ to the class of infectious agents $\I$ as
 \[
  \mathcal{E}_t = \int_\S u\,dg_t - \int_\I u\, dg_t,
 \]
 for all times $t\ge 0$. Roughly speaking, $\mathcal{E}_t$ gives an indication of the probability of agents that lie within a infinitesimal neighborhood of $u_*$, i.e., around the point of transition. 
 
 Similar to the classical case, the disease is said to be {\em epidemic} if 
 \[
  \frac{d}{dt}\mathcal{E}_t\,\Big|_{t=0}>0,
 \]
 which suggests the presence of agents transitioning from class $\S$ to class $\I$. 
\end{definition}

By taking the temporal derivative of $\mathcal{E}_t$, we obtain
 \begin{align*}
  \frac{d}{dt}\mathcal{E}_t %&= \int_\S u\,d(\partial_tg_t) - \int_\I u\, d(\partial_t g_t) \\
  &= -\int_\S \Big(\H' - \bar \rho \,\M[g_t]\Big)dg_t + u \Big(\H' g_t - \bar \rho \,\M[g_t]g_t\Big)\Big|_{-1}^{u_*} \\
  &\hspace*{6em}+ \int_\I \Big(\H' - \bar \rho \,\M[g_t]\Big)dg_t - u \Big(\H' g_t - \bar \rho \,\M[g_t]g_t\Big)\Big|_{u_*}^{1}\\
  &= -\lambda S_t + \bar \rho\, c_\chi S_t I_t - \gamma I_t, %= \gamma I_t \Big( \frac{\bar \rho\, c_\chi S_t}{\gamma} - \frac{\lambda S_t}{\gamma I_t} - 1\Big),
 \end{align*}
 where we denote $S_t = \int \1_\S^\e\,dg_t$ and $I_t = \int \1_\I^\e\, dg_t$. Consequently, we have
 \begin{align*}
  \frac{d}{dt}\mathcal{E}_t\,\Big|_{t=0} = \gamma I_0 (\mathfrak{R}_0-1),
 \end{align*}
 with the basic reproduction number $\mathfrak{R}_0= \bar\rho\,c_\chi S_0/\gamma - \lambda S_0/(\gamma I_0)$, which indicates that an epidemic only occurs when $\mathfrak{R}_0>1$. Notice that if $\lambda=0$, we recover the classical basic reproduction number.

 Summarizing the discussion above yields the following statement.
 
\begin{theorem}\label{thm:epidemic}
 Let $\H$, $\psi$ and $\chi$ be feasible and satisfy additionally \eqref{eq:simplify}. Then
 \[
  \frac{d}{dt}\mathcal{E}_t\,\Big|_{t=0} = \gamma I_0 (\mathfrak{R}_0-1),
 \]
 with the basic reproduction number $\mathfrak{R}_0= \bar\rho\,c_\chi S_0/\gamma - \lambda S_0/(\gamma I_0)$. 
 
 In particular, an epidemic occurs when $\mathfrak{R}_0>1$.
\end{theorem}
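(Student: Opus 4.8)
The plan is to differentiate the functional $\mathcal{E}_t=\int_\S u\,dg_t-\int_\I u\,dg_t$ directly, use the macroscopic equation \eqref{eq:macro} to replace the time derivative of $g_t$, and then integrate by parts over each compartment. Working with the density $g_t$ (which is legitimate since $g_t\in\P_1^{\text{ent}}(J)$ when $g_0$ is, by the preceding proposition), differentiation under the integral gives $\frac{d}{dt}\mathcal{E}_t=\int_\S u\,\partial_t g_t\,du-\int_\I u\,\partial_t g_t\,du$, and inserting $\partial_t g_t=\partial_u(\H' g_t-\bar\rho\M[g_t]g_t)$ reduces everything to integrals of $u\,\partial_u(\cdots)$ over $\S=(-1,u_*)$ and $\I=(u_*,1)$ (recall $\bar u=1$). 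One integration by parts on each interval transfers the derivative off the flux, producing the bulk terms $-\int_\S(\H'-\bar\rho\M[g_t])\,dg_t$ and $+\int_\I(\H'-\bar\rho\M[g_t])\,dg_t$, together with boundary evaluations of $u(\H' g_t-\bar\rho\M[g_t]g_t)$ at the three points $u\in\{-1,u_*,1\}$.

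The first key step is to dispose of the boundary terms. Because the ansatz \eqref{eq:simplify} is built from mollified indicators with $\supp(\1_A^\e)\subset A$, both $\H'$ and the factor $\psi$ appearing in $\M[g_t]=\psi\int_J\chi\,dg_t$ vanish at each of the endpoints $-1$, $u_*$, $1$; hence the whole flux $\H' g_t-\bar\rho\M[g_t]g_t$ vanishes there and every boundary contribution drops out. Next I would substitute the explicit forms: on $\S$ we have $\H'=\lambda\1_\S^\e$, while $\M[g_t]=c_\chi\1_\S^\e\int_J\1_\I^\e\,dg_t=c_\chi\1_\S^\e I_t$ with $I_t=\int\1_\I^\e\,dg_t$; on $\I$ the factor $\psi=\1_\S^\e$ forces $\M[g_t]=0$, leaving only $\H'=-\gamma\1_\I^\e$. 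The one structural point that really matters—and the only step I expect to require any care—is the asymmetry produced by the disjoint supports $\supp(\psi)\cap\supp(\chi)=\emptyset$: the interaction term lives solely on $\S$ through $\psi$, so it contributes the single bilinear term $\bar\rho c_\chi S_t I_t$ (with $S_t=\int\1_\S^\e\,dg_t$) and nothing on $\I$. Collecting the three pieces yields $\frac{d}{dt}\mathcal{E}_t=-\lambda S_t+\bar\rho c_\chi S_t I_t-\gamma I_t$.

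Finally I would evaluate at $t=0$ and rearrange. Factoring $\gamma I_0$ out of $-\lambda S_0+\bar\rho c_\chi S_0 I_0-\gamma I_0$ gives exactly $\gamma I_0(\mathfrak{R}_0-1)$ with $\mathfrak{R}_0=\bar\rho c_\chi S_0/\gamma-\lambda S_0/(\gamma I_0)$, which is the asserted identity. Since $\gamma>0$ and $I_0>0$, the sign of $\frac{d}{dt}\mathcal{E}_t|_{t=0}$ is governed solely by $\mathfrak{R}_0-1$, so by the definition of an epidemic the disease is epidemic precisely when $\mathfrak{R}_0>1$. In short, the only nontrivial computation is the integration by parts, and the only thing one must watch is that the support conditions on the mollified indicators are genuinely used twice—once to annihilate the boundary terms, and once to decouple the $\S$ and $\I$ contributions of the nonlocal operator $\M$.
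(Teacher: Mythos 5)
Your proposal is correct and follows essentially the same route as the paper: differentiate $\mathcal{E}_t$, insert the macroscopic equation, integrate by parts on $\S$ and $\I$, use the support properties of the mollified indicators in \eqref{eq:simplify} to kill the boundary terms and to reduce the bulk integrals to $-\lambda S_t+\bar\rho c_\chi S_tI_t-\gamma I_t$, then evaluate at $t=0$ and factor out $\gamma I_0$. Your treatment is in fact slightly more explicit than the paper's, which displays the boundary evaluations without spelling out why they vanish.
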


\section{Numerical Investigations}\label{sec:numerics}
We recall the two epidemiological models that will be under investigation within this section, namely the microscopic model
\[
 dX_t^i = \sqrt{2\sigma}\,dW_t^i,\qquad dU_t^i =-\H'(U_t^i)\,dt + F_N(X_t^i,U_t^i,{\bf X}_t,{\bf U}_t)\,dt,
\]
and the macroscopic model
\[
 \partial_t g_t -\partial_u \Big(\H' g_t - \bar \rho \,\M[g_t]g_t\Big) = 0.
\]
In all our numerical simulations, we consider the spatial dynamics to be within the bounded domain $\Omega=[0,1]^2$, with reflecting boundary conditions for the Wiener processes $W_t^i$. Furthermore, we only consider interactions of product form, i.e.,
\[
 \K(x,u,y,\nu)=\Phi(x-y)\psi(u)\chi(\nu),\qquad (x,u), (y,\nu)\in S.
\]

The standard {\em Euler--Maruyama} scheme was employed to solve the microscopic equations numerically \cite{kloeden2011numerical}. Appropriate step sizes were chosen to ensure stability of the explicit scheme. In the absence of noise, i.e., $\sigma=0$, we simply use the standard {\em explicit Euler} scheme. Since stochastic processes admit different solution paths for different realizations, we consider multiple realizations (often $M=100$ realizations) to obtain statistical information such as the mean and variance.

To compute the mean $m_t^\varphi$ of a given observable bounded $\varphi$, we use the well-known estimator
\[
 m^\varphi_t = \frac{1}{M}\sum\nolimits_{j=1}^M \varphi({\bf Z}_t^{(j)}),
\]
which ensures convergence towards the mean via the law of large numbers. Here, the superscript index $j$ represents the $j$th realization of the microscopic simulation. Typical observables we often use are the number of agents within the health classes $\S$, $\I$ and $\R$:
\[
 S_t = \sum\nolimits_{i=1}^N \1_\S(U_t^i),\qquad  I_t = \sum\nolimits_{i=1}^N \1_\I(U_t^i),\qquad  R_t = \sum\nolimits_{i=1}^N \1_\R(U_t^i).
\]
As an estimator for the variance, we choose the unbiased sample variance
\[
 \text{var}_t^\varphi = \frac{1}{M-1}\sum\nolimits_{j=1}^M \Big( \varphi({\bf Z}_t^{(j)}) - m^\varphi_t \Big)^2,\qquad s_t^\varphi = \sqrt{\text{var}_t^\varphi}.
\]
In all the plots below, we use the color {\em blue} to identify $S_t$, {\em red} for $I_t$ and {\em green} for $R_t$. The standard deviation for each observable $s_t^\varphi$ will be shown as shaded regions around its sample mean $m_t^\varphi$.

As for the numerical realization of the macroscopic equation, we employ a {\em Riemann} solver, or more precisely the {\em Harten--Lax--Leer} (HLL) Riemann solver \cite{toro2009}. In this method, an approximation for the intercell numerical flux is obtained directly, without the need to solve the local Riemann problems exactly. Therefore this is only an approximate {\em Godunov method}. The grid and time step sizes are chosen appropriately to satisfy the CFL condition demanded by the method.

All numerical simulations were implemented in {\sf python 2.7.6} with additional scientific computing packages, such as {\sf numpy} and {\sf scipy}.

%For the convenience of the reader, we provide a short outline of the scenarios we consider within this section.
%\begin{enumerate}
% \item[\bf \ref{subsection:SIR}] Comparison with the classical SIR model.
% \item[\bf \ref{subsection:links}] Links between the microscopic and macroscopic models.
% \item[\bf \ref{subsection:inhomogeneous}] Microscopic model: An inhomogeneous setting.
% \item[\bf \ref{subsection:verify}] Verification of Theorem~\ref{thm:epidemic}.
%\end{enumerate}

\subsection{Comparison with the classical SIR model}\label{subsection:SIR}
Here, we address the question of whether the microscopic model (\ref{eq:micro}) can recover results obtained from the classical SIR model (cf.~Section~\ref{sec:intro}), at least in the qualitative sense. Obviously, this would require us to construct an appropriate potential landscape $\H$ and interaction force $\K$. However, while the microscopic model has multiple functions as parameters, the standard SIR model only has two parameters, namely $\beta$ and $\gamma$. For this reason, it is crucial to correctly understand and interpret these parameters accordingly. We further restrict the microscopic model to agents having no mobility $(\sigma=0)$ that are located on an equidistant grid in the domain $\Omega$. This reduces the model to a deterministic ordinary differential equation, apart from the initial distribution. 

As mentioned before, the parameter $\beta$ in the classical SIR model is known as the transmission rate, which depends on the probability of transmission $p$ and the average number of contacts per agent $C_0=C_0(N)$, irregardless of an agent's activity. More specifically, $\beta=p C_0$. Since the agents are stationary, it is possible to explicitly determine the number of contacts per agent. 

\begin{figure}[h]
 \includegraphics[width=0.25\textwidth]{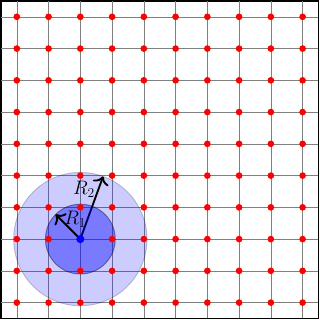}
 \caption{Agents on an equidistant grid in $\Omega$ with possible interaction regions.}\label{fig:grid}
\end{figure}

Using the indicator function $\1_{B_R(x)}$, we determine $C_0$ the number of agents within the vicinity of $x\in\Omega$ that are maximal radius $R$ away from $x\in\Omega$ (cf.~Fig.~\ref{fig:grid}). On the other hand, if we assume a uniform distribution for the spatial density, i.e., $\rho_{\text{stat}}\equiv 1/|\Omega|$, then $C_0$ may be considered as the product of the number density $N\rho_{\text{stat}}$ and the area of interaction indicated by $\1_{B_R(x)}$, i.e.,
\[
 C_0 = C_0(N) = N\int_\Omega \1_{B_R(x)}(y)\,d\rho_{\text{stat}}(y) = \frac{N}{|\Omega|}|B_R(x)| = N\pi R^2/|\Omega|,
\]
which depends explicitly on the number of agents. Considering the equidistant grid in $\Omega=[0,1]^2$ for any $N\gg 1$, we rescale the radius as $R=R_0/\sqrt{N-1}$, where $R_0>0$ is a fixed constant, in order to keep the number of individual contacts bounded as $N\to\infty$. More precisely, we have
\begin{align}\label{eq:contact}
 C_0(N) = \frac{N}{N-1}\frac{\pi}{|\Omega|} R_0^2\;\longrightarrow\; c_0:=\frac{\pi}{|\Omega|} R_0^2,\qquad\text{as\, $N\to\infty$}.
\end{align}
There are also other ways of scaling the contact rate (see, for example \cite{hu2013scaling}). However, this consideration is, in fact, the weak coupling scaling mentioned in Remark~\ref{rem:weak_coupling}, which allows for the mean-field limit. Indeed, if we set $\rho^N=\frac{1}{N}\sum\nolimits_j \delta_{X^j}$ as the empirical measure of locations, then
\[
 \sum\nolimits_j \1_{B_R(x)}(X^j) = N\rho^N(B_R(x))  \approx \rho^N(B_{R_0}(x)) = \frac{1}{N}\sum\nolimits_j \1_{B_{R_0}(x)}(X^j), 
\]
for a large number of agents $N\gg 1$. Hence, it makes sense to use the activation function
\[
 \Phi(x,y)=\1_{B_{R_0}(x)}(y),
\]
where $R_0$ is chosen appropriately, depending on $\beta$.

\begin{figure}[h]
 \includegraphics[width=0.4\textwidth]{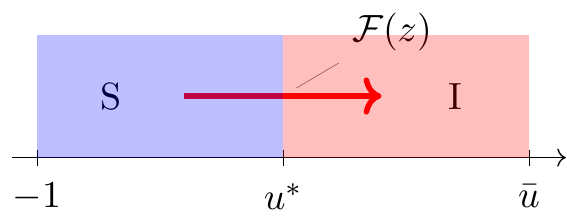}\vspace*{-1em}
 \caption{Interaction force acting on the class $\S$.}\label{fig:num-interaction}
\end{figure}

We now work towards identifying the transmission probability $p$ by considering the mean-field equation \eqref{eq:nonlinear} with the product distribution $f_t = \rho_{\text{stat}}\otimes g_t$ and $\psi=\1_\S^\e$, $\chi=c_\chi \1_\I^\e$, as adopted in \eqref{eq:simplify}. The expected effective intensity of interaction between the class of susceptible and infectious agents (cf.~Fig.~\ref{fig:num-interaction}) may then be computed as
\begin{align*}
 \E\left[\F[f_t](\X_t,\U_t)\right] &= c_\chi\iint_{S\times S} \1_{B_{R_0}(x)}(y) \1_\S^\e(u)\1_\I^\e(\nu) df_t(y,\nu)df_t(x,u) = c_\chi c_0 S_t I_t,
\end{align*}
where $S_t = \int \1_\S^\e\,dg_t$ and $I_t = \int \1_\I^\e\, dg_t$ provides the probabilities in classes $\S$ and $\I$, respectively. A direct comparison with the classical SIR model reveals the correspondence $\beta=c_\chi c_0$. However, since $c_0$ denotes the average number of contacts per agent, we have the relation $p=c_\chi$. Therefore, $c_\chi$ may also be considered as the probability of transmission of a specific disease. Let us summarize the discussion so far. Given $\beta=p C_0$ from the classical SIR model, we choose $R_0$ satisfying \eqref{eq:contact}, thereby yielding the interaction term
\[
 F_N(X^i,U^i,{\bf X},{\bf U}) = \frac{p}{N}\sum\nolimits_{j\ne i} \1_{B_{R_0}(X^i)}(X^j) \1_\S^\e(U^i)\1_\I^\e(U^j).
\]

As for the potential landscape $\H$, we first note that the classical SIR model does not describe the resistance of an agent towards an infection. Therefore, $\H'|_\S \equiv 0$. On the other hand, if an agent has been infected, i.e., $U^i\in \I\cup\R$ the interaction term vanishes. Hence, $\H|_{\I\cup\R}$ should describe the process of recovery. We further assume that $\H|_{\I\cup\R}$ is linear, with a maximum at $u=u_*$ and minimum at $u=1$ (cf.~Fig.~\ref{fig:potential}(b)). Then the evolution of an infected agent is given by
\[
 \frac{d}{dt}U_t^i = -\H'(U_t^i) = \lambda,\qquad U_0^i=u_*,
\]
with $\lambda>0$, where $-\lambda$ is the slope of $\H|_{\I\cup\R}$. Solving this equation gives $U_t^i = u_* + \lambda t$. Recalling the definition of the recovery rate $\gamma=1/\tau$, where $\tau>0$ denotes the mean waiting time until an infected individual recovers, we deduce $U_{\tau}^i=1$. Hence, we obtain the relation
\[
 \lambda = (1-u_*)/\tau = \gamma(1-u_*).
\]
Putting all conditions together, we end up with a piecewise linear potential landscape satisfying
\begin{align}\label{eq:landscape}
 \H' = \gamma(1-u_*)\1_{\I\cup \R}.
\end{align}
The piecewise linear potential landscape $\H_{\text{lin}}$ depicted in Fig.~\ref{fig:potential}, for instance, verifies the above requirements and provides a prototype for this comparison throughout this section. 

\begin{figure}[h]\centering
 \subfigure[Non-epidemic]{\includegraphics[width=0.4\textwidth]{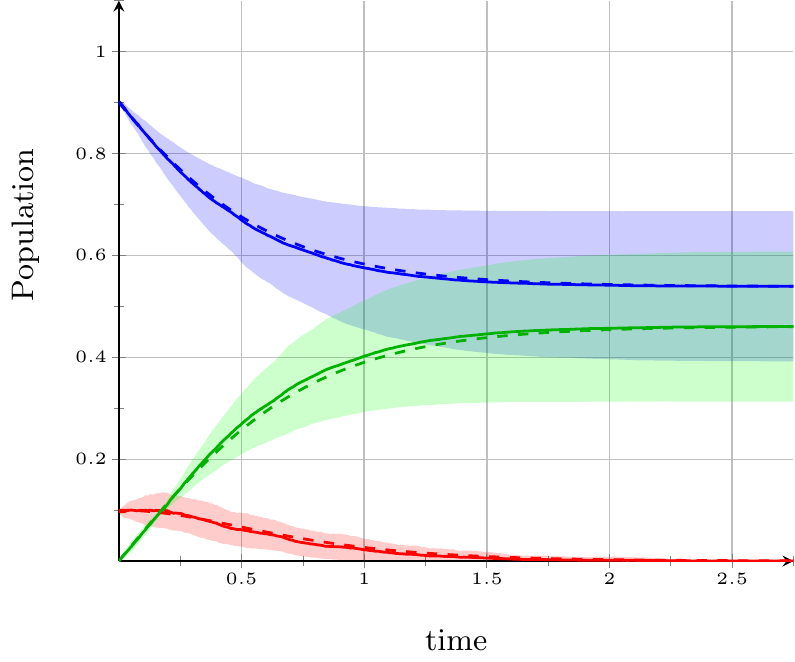}}\hspace*{0.1\textwidth}
 \subfigure[Epidemic]{\includegraphics[width=0.4\textwidth]{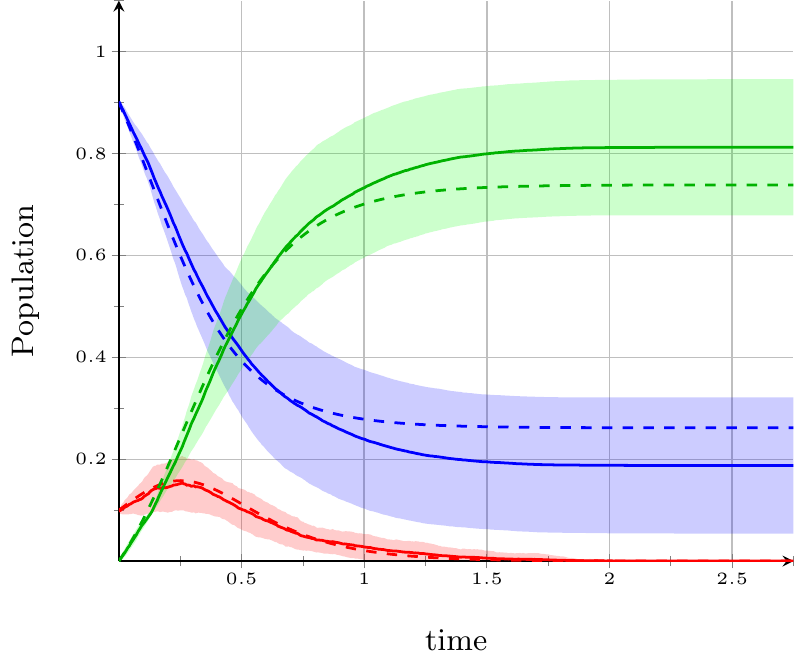}}
 \caption{Compartmental evolution for the microscopic model on an equidistant grid with $N=100$ agents and $M=100$ realizations in comparison with classical SIR model. The dashed line represents the classical SIR model.}\label{fig:sir}
\end{figure}

Fig.~\ref{fig:sir} shows an acceptable amount of similarity between the two models since the classical SIR model provides solutions that lie within the shaded region of the microscopic model. For the simulation in Fig.~\ref{fig:sir}, we distribute the agents' locations  on an equidistant grid, with 90\% of the agents having activity uniformly distributed in $\S$ and 10\% of the agents having activity uniformly distributed in $\I$. Other parameters used are $C_0=8$, $p=0.5$, $u_*=0$, $\bar u=0.3$ and $\gamma=1.5$.

\begin{remark}
 If we determine $C_0$ via a standard Gaussian distribution with variance $\sigma^2>0$, instead of using $\1_{B_R(x)}$, we obtain the formula
 \[
  C_0(N) = N\int_\Omega \exp\left(-\frac{|x-y|^2}{2\sigma^2}\right)d\rho_{\text{stat}}(y) = 2N\pi \sigma^2/|\Omega|,
 \]
 which depends again on the number of agents. To ensure that $C_0$ remains constant for any $N\in\N$, we rescale the variance as $\sigma^2=\sigma_0^2/N$, which yields $C_0= 2\pi\sigma_0^2/|\Omega|$. As opposed to the indicator function, the Gaussian distribution considers every agent as a neighbor. Neighbors that are closer are given more weight than those that are further away. This might be more appropriate whenever considering a domain $\Omega$ that represents, for example, an enclosed medium sized room. 
 
 \begin{figure}[h]\centering
  \subfigure[Non-epidemic]{\includegraphics[width=0.42\textwidth]{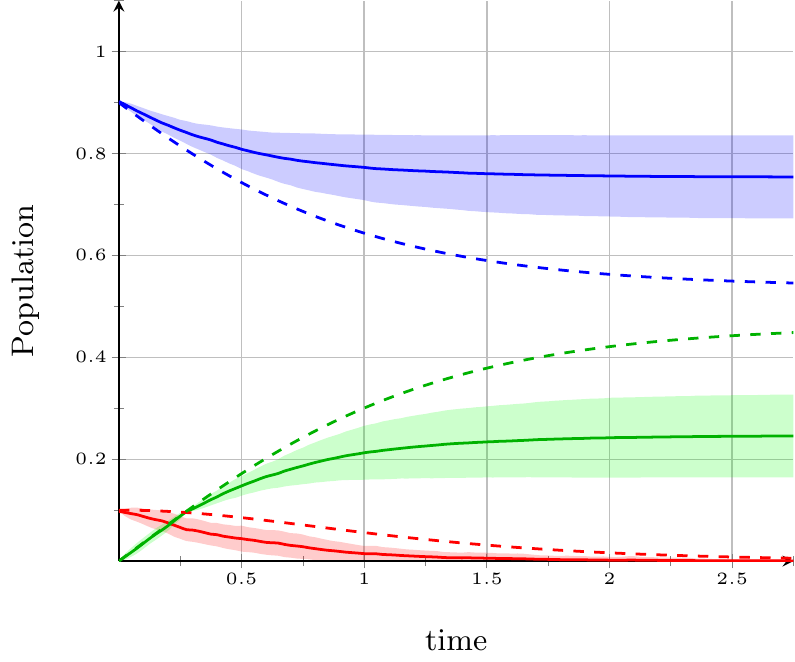}}\hspace*{2.5em}
  \subfigure[Epidemic]{\includegraphics[width=0.42\textwidth]{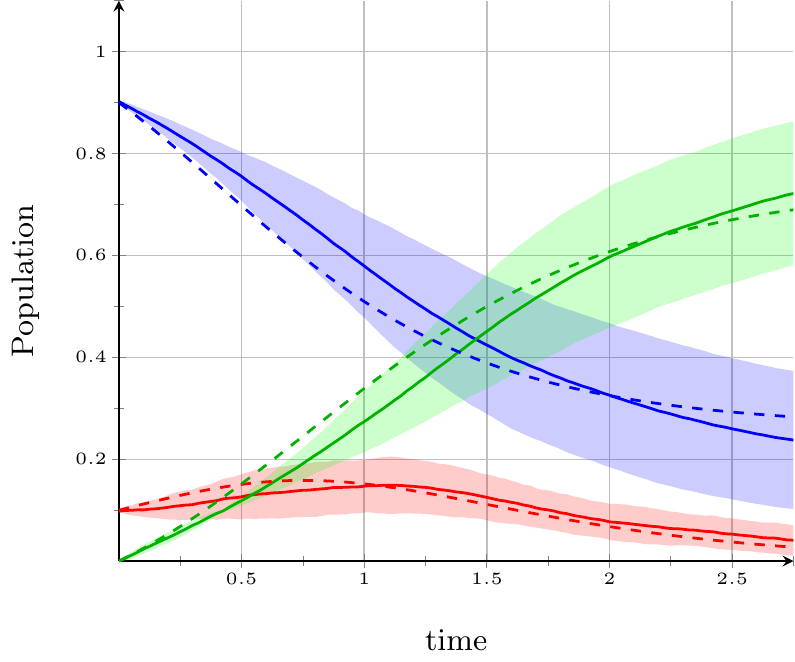}}
  \caption{Compartmental evolution for the microscopic model on an equidistant grid with $N=225$ agents, $M=100$ realizations and a Gaussian-type activation function in comparison with classical SIR model.}\label{fig:sir-gauss}
 \end{figure}
 
 Fig.\ref{fig:sir-gauss} depicts the comparison between the microscopic model with a Gaussian-type activation function $\Phi$, as described above. One observes a slight disparity between the two models, especially in the non-epidemic case, where the number of recovered agents are fewer than the susceptible ones, in contrast to the classical SIR model. Nevertheless, the qualitative behavior of the solutions do coincide to some extend.
\end{remark}

\subsection{Links between the microscopic and macroscopic models}\label{subsection:links}

Another interesting context for the agent-based model is its connection to its macroscopic counterpart. We now investigate this relationship via two ways, namely the direct link, and the mobility link. Unless stated otherwise, we consider a linear potential landscape $\H$ satisfying \eqref{eq:landscape}, and $\psi = \1_\S^\e$, $\chi=c_\chi \1_\I^\e$ for the simulations within this subsection. Furthermore, the agents' locations are initially distributed on an equidistant grid, with 80\% of the agents having activity uniformly distributed in $\S$ and 20\% of the agents having activity uniformly distributed in $\I$. 

\subsubsection{Direct link}
Looking back at the derivation of the macroscopic model, we first derived the mean-field equation by passing to the limit $N\to\infty$, and thereafter the spatial activation function $\Phi$ was removed in the process. Therefore, we will need to look for an appropriate $\Phi$ for the microscopic model for comparison. In fact, the spatial scaling $\tilde x\sim \e x$ conducted in Model~\ref{model:macro} contracts the bounded domain $\Omega$ into a spatially concentrated point as $\e \to 0$. Hence, the macroscopic model may also be seen as a {\em complete mixture model}, where the support of $\Phi$ is the entire domain, i.e., $\supp(\Phi)=\Omega$. Consequently, choosing $\Phi \equiv 1$ results in an agent based model, which is independent of spatial resolution. Since the spatial configuration is obsolete in this case, we may consider any spatial location for the agents.

\begin{figure}[h]\centering
 \subfigure[$N=100$ agents]{\includegraphics[width=0.42\textwidth]{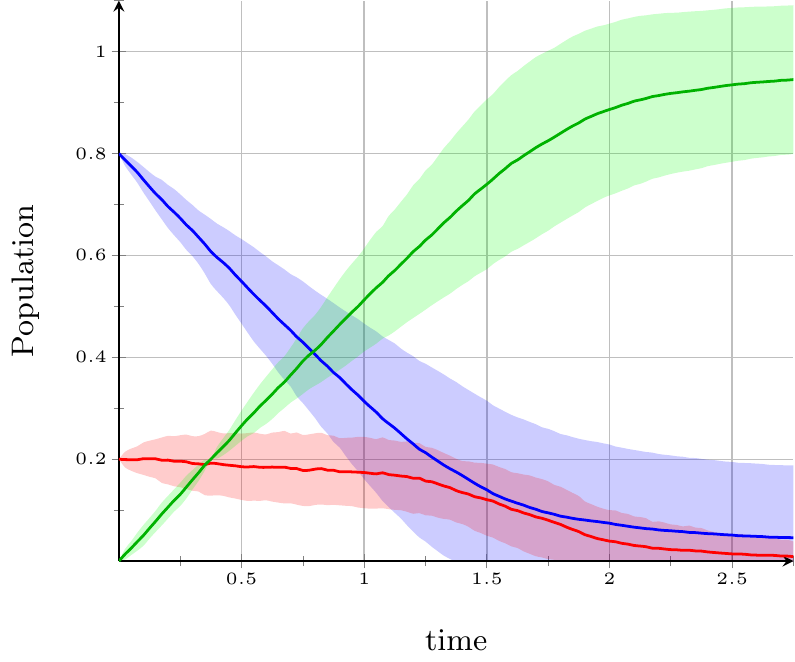}}\hspace*{2.5em}
 \subfigure[$N=400$ agents]{\includegraphics[width=0.42\textwidth]{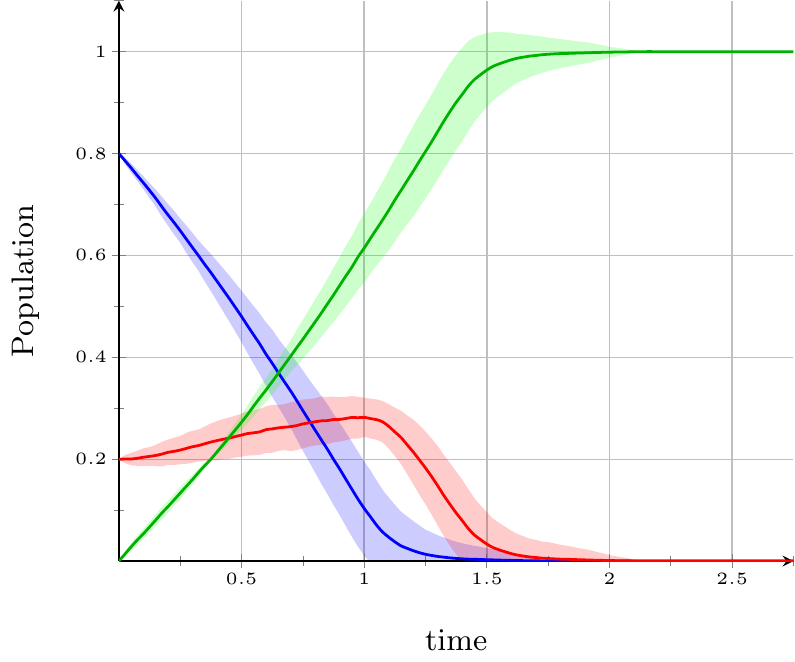}}
 \subfigure[$N=1600$ agents]{\includegraphics[width=0.42\textwidth]{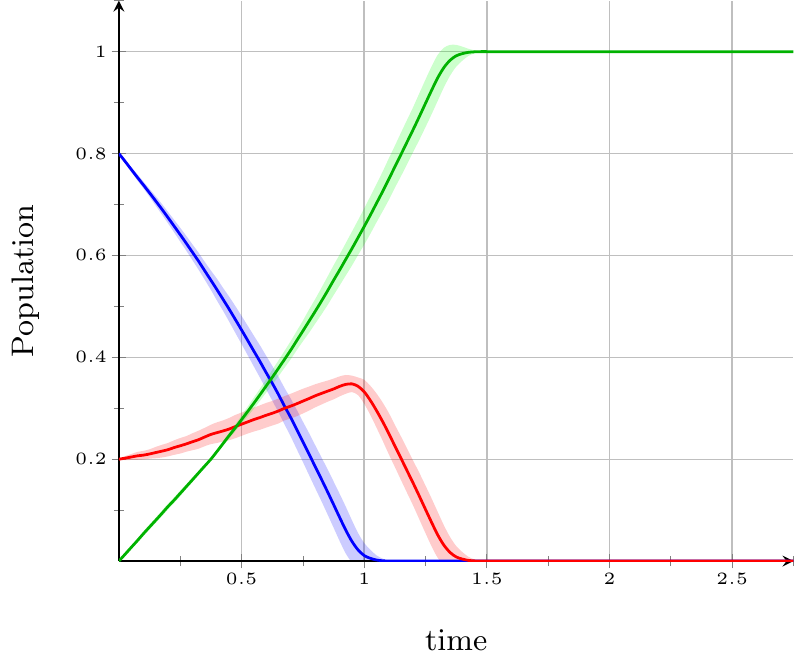}}\hspace*{2.5em}
 \subfigure[Macroscopic model]{\includegraphics[width=0.42\textwidth]{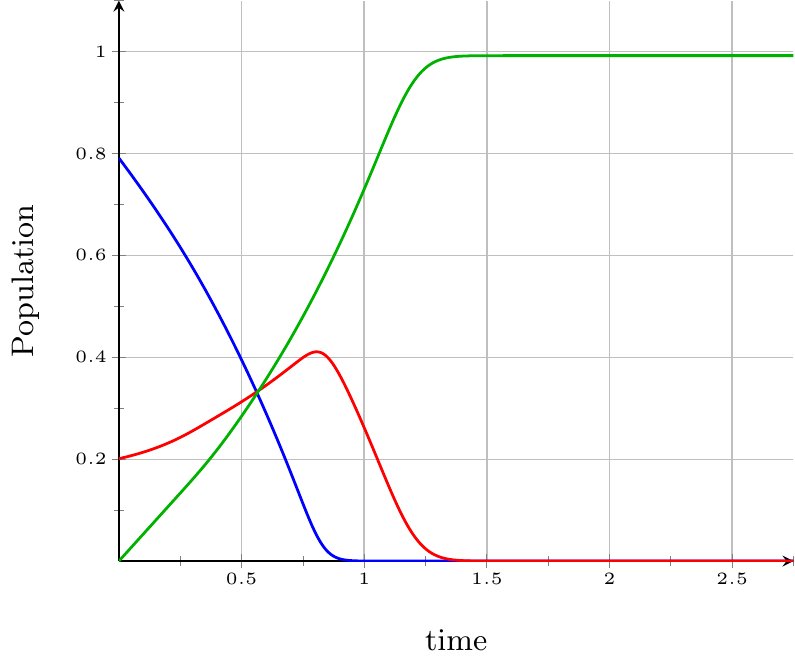}}
 \caption{Compartmental evolution for the microscopic model on an equidistant grid with increasing number of agents in comparison with the evolution generated by the macroscopic model. Other parameters used are $C_0=8$, $c_\chi=0.5$, $u_*=0$, $\bar u=0.3$ and $\gamma=1.5$.}\label{fig:directlink}
\end{figure}

As seen in Fig.~\ref{fig:directlink}, the solution provided by the microscopic model evidently converges to the solution of the macroscopic model as $N\to\infty$. This verifies on one hand the mean-field limit discussed in Section~\ref{sec:mean-field}, as well as the choice $\Phi\equiv 1$. To supplement the validation, we investigate the behavior of the probability distribution corresponding to the microscopic model with $N=1600$ agents and the macroscopic model, respectively. From this point of view, we recover the complete information concerning the temporal evolution of activity, which provides comprehensive behavior of transitions between the three health classes $\S$, $\I$ and $\R$.

\begin{figure}[h]\centering
 \subfigure[$t=0$]{\includegraphics[width=0.24\textwidth]{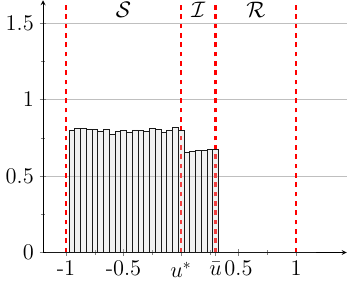}}
 \subfigure[$t\approx 0.6$]{\includegraphics[width=0.24\textwidth]{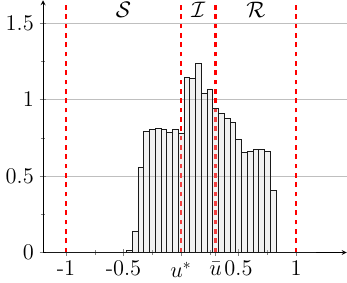}}
 \subfigure[$t\approx 0.75$]{\includegraphics[width=0.24\textwidth]{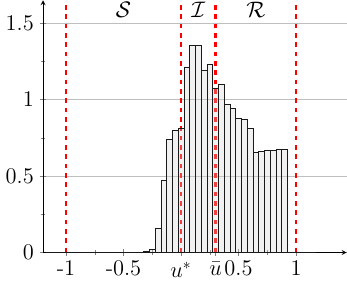}}
 \subfigure[$t\approx 1.125$]{\includegraphics[width=0.24\textwidth]{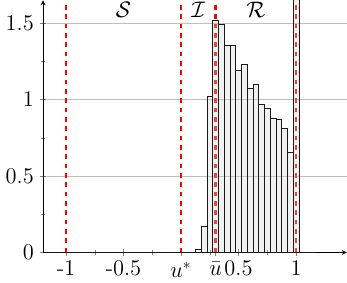}}
 \caption{Marginal distribution in the activity space of the microscopic model with $N=1600$ agents with parameters as used in Fig.~\ref{fig:directlink}.}\label{fig:microdensity}
\end{figure}

\begin{figure}[h]\centering
 \subfigure[$t=0$]{\includegraphics[width=0.24\textwidth]{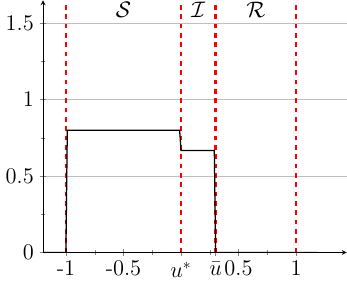}}
 \subfigure[$t\approx 0.6$]{\includegraphics[width=0.24\textwidth]{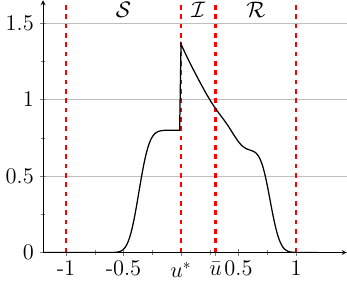}}
 \subfigure[$t\approx 0.75$]{\includegraphics[width=0.24\textwidth]{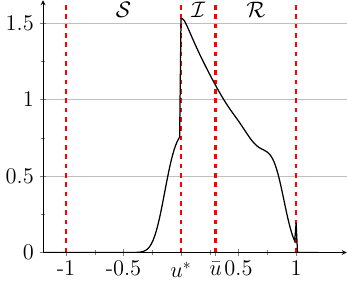}}
 \subfigure[$t\approx 1.125$]{\includegraphics[width=0.24\textwidth]{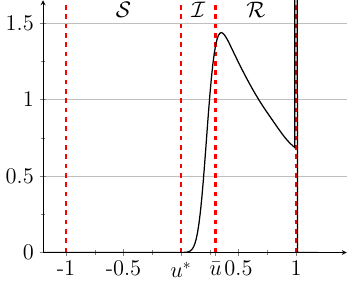}}
 \caption{Distribution $g_t$ corresponding to the macroscopic model \eqref{eq:macro} with parameters as used in Fig.~\ref{fig:directlink}.}\label{fig:macrodensity}
\end{figure}

A quick comparison of the figures shown in Fig.~\ref{fig:microdensity} and Fig.~\ref{fig:macrodensity} clearly shows evidence of conformity between the two models, even for the complete distribution sense. Fig.~\ref{fig:macrodensity} additionally depicts the convergence towards the stationary state $g_\infty=\delta_{1}$ derived in Section~\ref{subsection:stationary}.

\subsubsection{Mobility link}
In this case, we consider spatial movements of the agents in the form of the standard Wiener process within the bounded domain $\Omega=[0,1]^2$, i.e., we consider the complete microscopic equation \eqref{eq:micro} with reflective boundary conditions for the Wiener process. The simulations in this part concentrates on the connection between the stationary microscopic model introduced in Section~\ref{subsection:SIR}, which coincides with the classical SIR model, and the complete mixture model described in the previous case. Practically speaking, the presence of mobility 'interpolates' between theses two scenarios, with $\sigma\in[0,1]$, i.e., the intensity of mobility being the interpolation parameter, as may be seen in Fig.~\ref{fig:mobilitylink}.

\begin{figure}[h]\centering
 \subfigure[$\sigma=0$]{\includegraphics[width=0.42\textwidth]{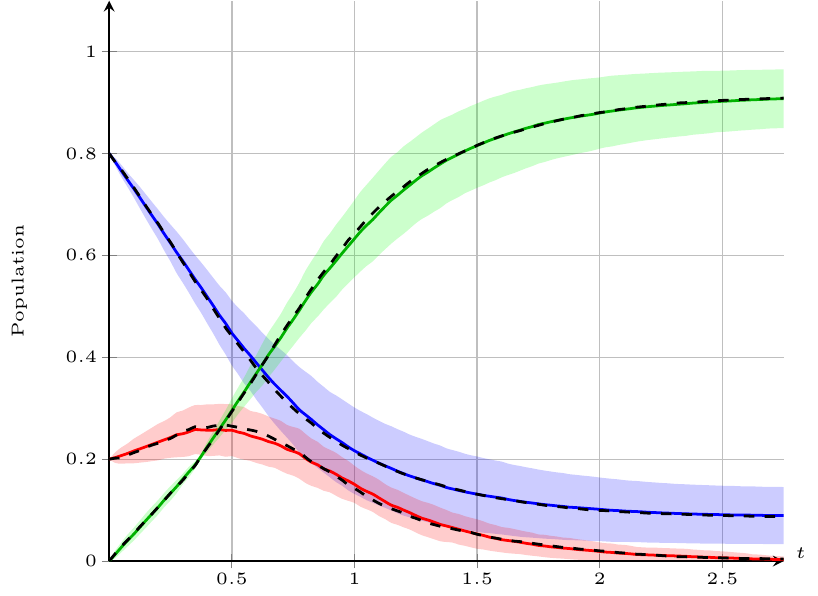}}\hspace*{2.5em}
 \subfigure[$\sigma=0.01$]{\includegraphics[width=0.42\textwidth]{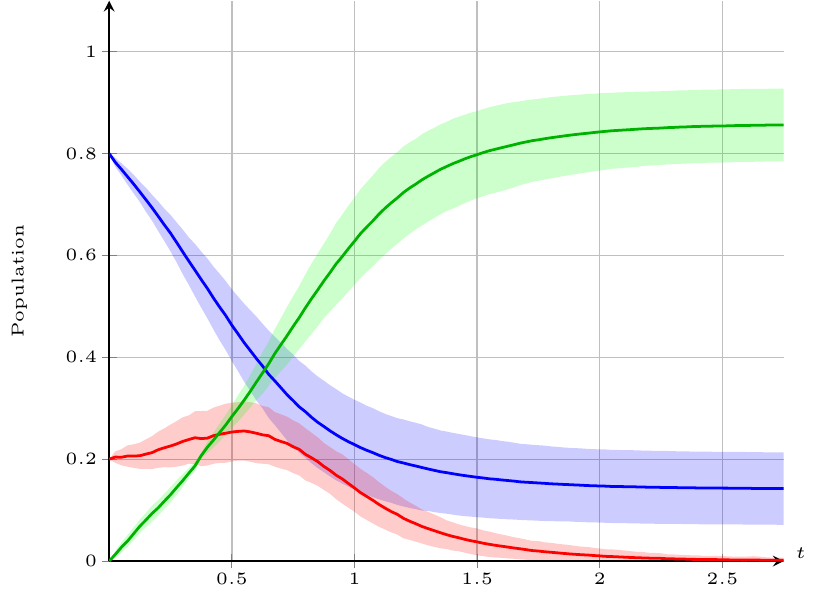}}
 \subfigure[$\sigma=0.05$]{\includegraphics[width=0.42\textwidth]{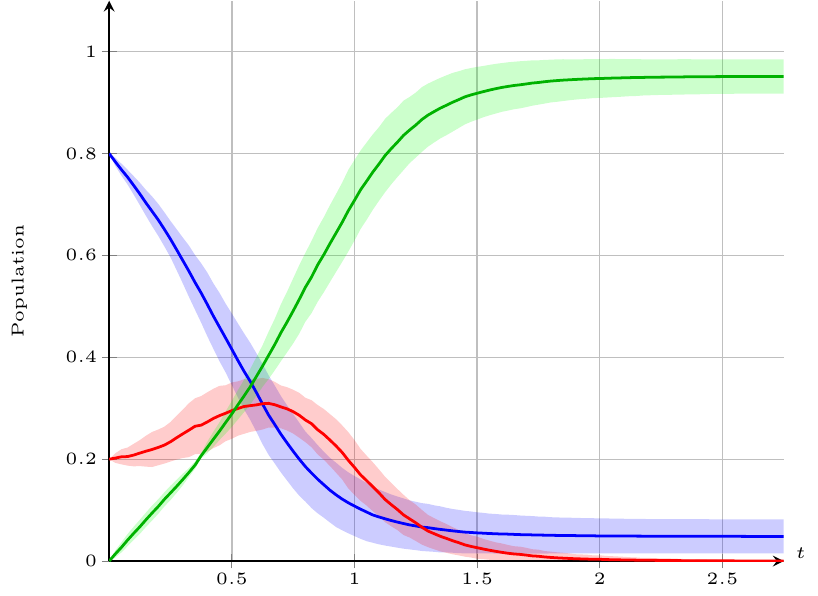}}\hspace*{2.5em}
 \subfigure[$\sigma=1$]{\includegraphics[width=0.42\textwidth]{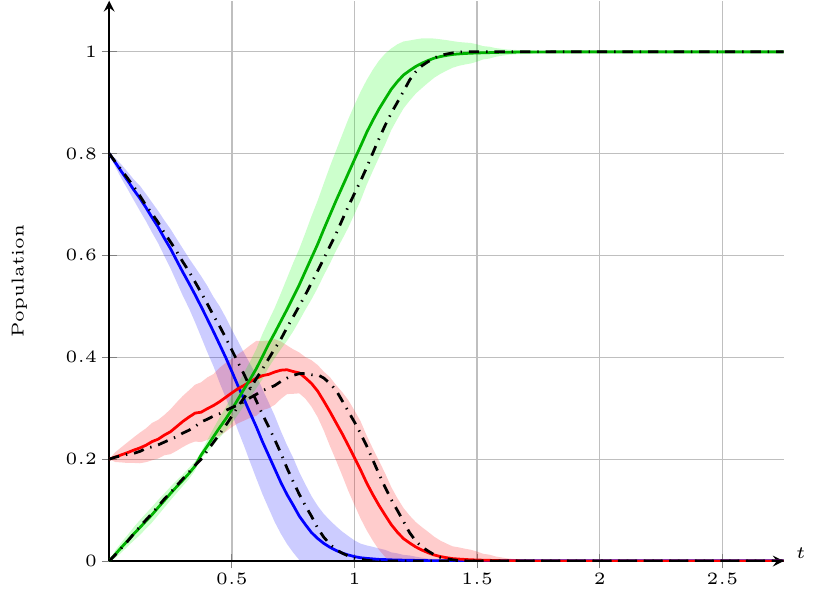}}
 \caption{Convergence of the microscopic model with $N=225$ for increasing mobility $\sigma$ towards the complete mixture model. The dashed line represents the stationary microscopic model, while the dotted line depicts the complete mixture model. Other parameters used are $C_0=8$, $c_\chi=0.5$, $u_*=0$, $\bar u=0.3$ and $\gamma=1.5$.}\label{fig:mobilitylink}
\end{figure}

The results shown in Fig.~\ref{fig:mobilitylink} suggest that increasing mobility increases the mixture of susceptible and infectious agents. This observation is to be expected since the increase in mobility speeds up the rate at which the spatial distribution reaches uniformity, which expresses that the amount of time an agent remains in a particular location is the same amount of time it remains everywhere else. This means that the Wiener process with $\sigma=1$ ensures that every susceptible agent has the same probability to meet an infectious agent as with other susceptible ones, and vice versa. Hence, a microscopic system with a high mobility intensity may just as well be modeled by the macroscopic model which demands less computational effort, since the spatial activation function $\Phi$ in the microscopic model has to be recomputed at every time step.

\subsection{Microscopic model: A spatially inhomogeneous setting}\label{subsection:inhomogeneous}

We finally consider a case the classical SIR model is unable to capture, namely the case where initial distribution of susceptible and infectious agents are no longer uniformly distributed within $\Omega$. To simplify the presentation, we consider an inhomogeneous setting  (in initial activity/health status) for the microscopic model with locations distributed equidistantly in $\Omega$.

\begin{figure}[h]\centering
 \subfigure[$t=0$]{\includegraphics[width=0.42\textwidth]{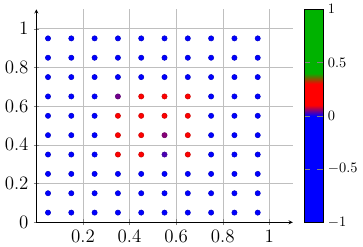}}\hspace*{2.5em}
 \subfigure[$t=0.5$]{\includegraphics[width=0.42\textwidth]{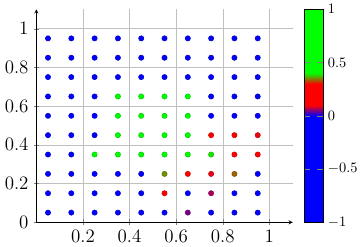}}
 \subfigure[$t=1$]{\includegraphics[width=0.42\textwidth]{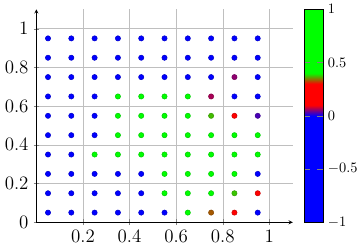}}\hspace*{2.5em}
 \subfigure[$t=3$]{\includegraphics[width=0.42\textwidth]{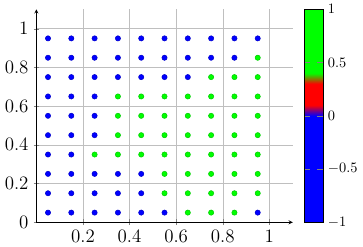}}
 \caption{Microscopic model with $N=100$ agents on an equidistant grid and a clustered initial distribution of activity. Other parameters used are $C_0=8$, $c_\chi=0.5$, $u_*=0$, $\bar u=0.3$ and $\gamma=1.5$.}\label{fig:inhomogeneous}
\end{figure}

In Fig.~\ref{fig:inhomogeneous}, one clearly observes that the distribution of location for infectious agents remains inhomogeneous at time $t>0$. Note that the activity of susceptible agents are evenly distributed in activity space. In this particular example, the susceptible agents bordering the upper region of the infectious agents at time $t=0$ have an activity close to being immune, i.e., $u\approx -1$, and therefore remain susceptible for all times $t\ge 0$.

\begin{figure}[h]
 \includegraphics[width=0.4\textwidth]{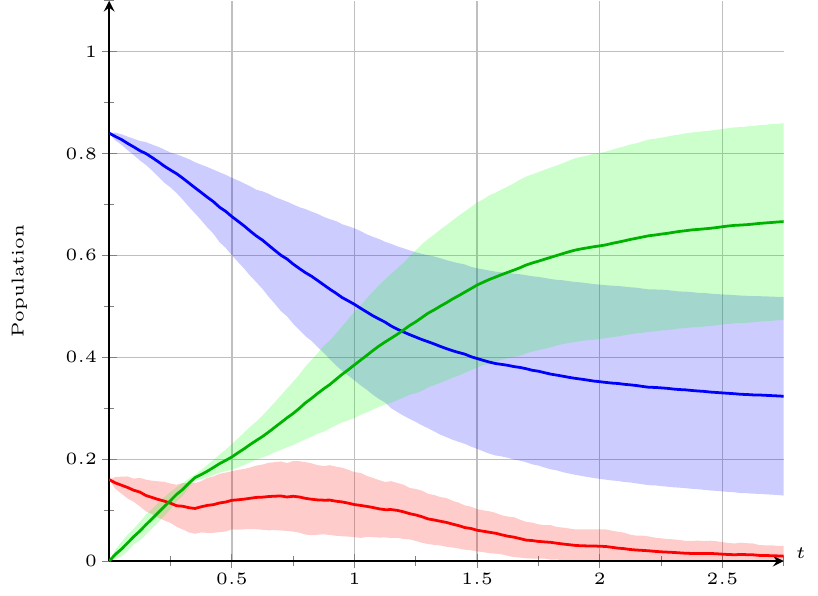}
 \caption{Compartmental evolution corresponding to Fig.~\ref{fig:inhomogeneous}.}\label{fig:sir-inhomogeneous}
\end{figure}

Fig.~\ref{fig:sir-inhomogeneous} depicts the compartmental evolution of the microscopic simulation shown in Fig.~\ref{fig:inhomogeneous}. Plots of this sort cannot be observed when using the classical SIR model, especially in the qualitative behavior of classes $\I$ and $\R$ between $t=0$ and $t=0.5$.

\subsection{Verification of Theorem~\ref{thm:epidemic}}\label{subsection:verify}
Here, we verify the validity of Theorem~\ref{thm:epidemic}. Therefore, following the assumptions made on the potential landscape and interaction term in Section~\ref{subsection:stationary}, we set
\[
 \psi=\1_\S^\e,\qquad \chi = c_\chi\1_\I^\e,\qquad \H' = \lambda\1_\S^\e -\gamma\1_\I^\e,
\]
with $\S=(-1,0)$ and $\I=(0,1)$, for different choices of parameters $c_\chi, \lambda,\gamma$ and initial conditions $S_0,I_0$, to obtain the cases $\mathfrak{R}_0=\bar\rho c_\chi S_0/\gamma - \lambda S_0/(\gamma I_0)>1$ (epidemic) or $\mathfrak{R}_0<1$ (non-epidemic). As in Section~\ref{subsection:links}, the agents' locations are initially distributed on an equidistant grid, with 80\% of the agents having activity uniformly distributed in $\S$ and 20\% of the agents having activity uniformly distributed in $\I$. In all cases, we set $\bar\rho=2$, to allow for larger values of $\lambda$ and $\gamma$, thereby speeding up the evolution, while keeping $c_\chi\in(0,1)$ fixed. We also set $\lambda=\gamma$, which leaves only $\gamma$ to be varied. In this case, the basic reproduction number simplifies to $\mathfrak{R}_0= \bar\rho c_\chi S_0/\gamma - S_0/I_0$.

 \begin{figure}[h]\centering
  \subfigure[Compartmental evolution]{\includegraphics[width=0.42\textwidth]{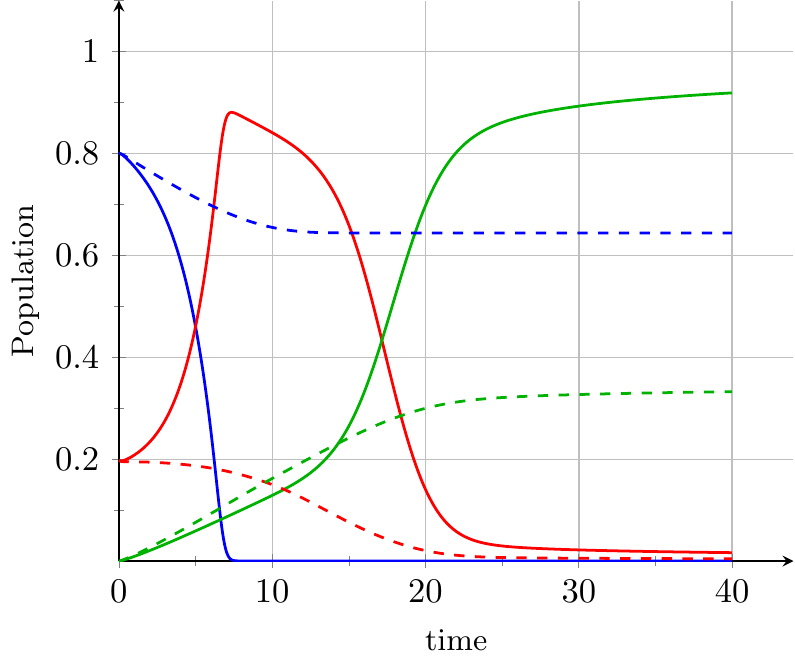}}\hspace*{2.5em}
  \subfigure[Effective transition $\mathcal{E}_t$]{\includegraphics[width=0.42\textwidth]{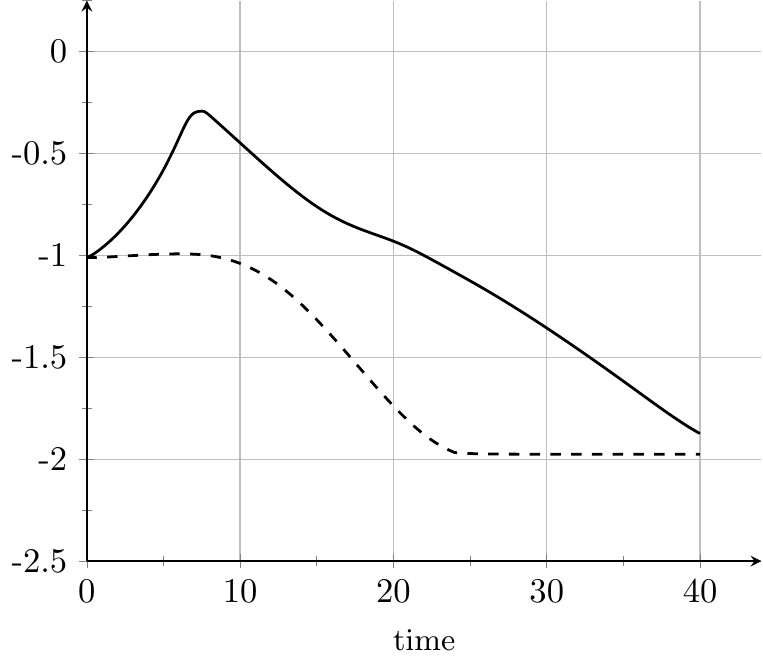}}
  \caption{Evolution of the macroscopic model with $c_\chi=0.3$. The solid lines depict the case $\mathfrak{R}_0=2$ (epidemic) with $\gamma=0.08$, while the dashed lines represent the case $\mathfrak{R}_0=0.8$ (non-epidemic) with $\gamma=0.1$.}\label{fig:verify}
 \end{figure}
 
 Fig.~\ref{fig:verify} provides the verification of Theorem~\ref{thm:epidemic}. In Fig.~\ref{fig:verify}(a), one clearly observes both cases, namely epidemic and non-epidemic, when the basic reproduction number $\mathfrak{R}_0$ is either greater than $1$ or less than $1$. This figure also affirms the stationary states suggested in Section~\ref{subsection:stationary}. Indeed, for $\mathfrak{R}_0>1$, we see that $g_t$ converges towards $g_\infty=\delta_1$ as $t\to\infty$, while for $\mathfrak{R}_0<1$, the stationary state is $g_\infty=(1-\alpha)\delta_{-1} + \alpha\delta_1$ with $\alpha\approx 0.3$. Fig.~\ref{fig:verify}(b), on the other hand, provides the evolution of the effective transition $\mathcal{E}_t$. As expected the temporal derivative of $\mathcal{E}_t$ at $t=0$ has the correct sign as indicated by Theorem~\ref{thm:epidemic}. Essentially, Fig.~\ref{fig:verify}(b) describes the complete behavior of the transition from the class $\S$ to class $\I$ within an infinitesimal neighborhood of the point $u_*\in J$. 

\section{Summary and outlook}\label{sec:conclusion}

In this paper, we successfully developed a model for mathematical epidemiology with spatial resolution, which also allows for a more detailed description of an agent's health status. This clearly paves a way for a more general description of disease transmission, thereby rendering it possible not only to determine the total number of individuals in a certain class of health state, but would also assist in locating the source of a disease. Due to the diversity of the parameters involved in its derivation, the model is able to describe various situations. However, this flexibility becomes also a drawback since the specification of these parameters is not easily accessible and therefore deserves further investigation. 

We further provided a recipe for deriving spatial macroscopic models for disease dynamics via passage to the mesoscopic scale. For our specific macroscopic model, we were able to derive a quantity that reflects upon the basic reproduction number $\mathfrak{R}_0$, which determines the possibility of an outburst. We also showed that stable stationary states exist for the macroscopic equation and that these stationary states are of the form $g_\infty = (1-\alpha)\delta_{-1}+\alpha\delta_1$, $\alpha\in[0,1]$. Unfortunately, the determination of $\alpha$, depending on the initial distribution $g_0$ remains an open problem and would therefore require further investigation. Nevertheless, one could, at this point, work on identifying the parameter $\alpha$ by means of inverse problems methods (cf.~\cite{tse2012identification}).

An obvious extension of the current microscopic model would be to incorporate spatial demographic information, as well as spatial interactions among agents. In fact, this was already pointed out in Remark~\ref{rem:ornstein}. More specifically, one may consider the interacting system
\[
 dX_t^i = -\Big[\nabla_x V(X_t^i)+ \frac{1}{N} \sum\nolimits_{j\ne i} U(X_t^i,X_t^j)\Big]\,dt + \sqrt{2\sigma}\,dW_t^i,
\]
where $V\colon\rr^d\to \rr$ describes the landscape of an area of a populated region, and $U\colon\rr^d\times\rr^d\to\rr$ is an interaction potential which may be both attracting and repulsive. The convex neighborhood of the local minima of $V$ provide areas that model higher concentration of population, which have a reduced communication between the clusters but still allow for transitions between these clustered populations. This may represent, for example, cities and meeting points. Such potentials may also be used to model special paths on which agents may travel, such as the migration of animals.

Another possible extension is to include different types of agents. Such models may be used to describe vector-based transmitted diseases such as malaria, dengue fever and the Zika virus. In this case, the importance of spatial inhomogeneity is indispensable. The mean-field and macroscopic equations corresponding to such systems are then coupled partial differential equations.

An interesting aspect for modification is the activity set $J$, which was fixed as $J=[-1,1]$ in this paper. Using a different set $J$ could lead to different dynamics for the activity variable. For instance, we may take $J=\mathbb{S}^1=\{x\in\rr^2\,|\, |x|=1\}$ as the unit circle in $\rr^2$. On this activity set, one can allow for recovered agents to become susceptible again after having been infected, thereby leading to generalization of the well-known classical SIS model. The spatially homogeneous nonlocal macroscopic equation analogous to \eqref{eq:macro} will then be posed on $J=\mathbb{S}^1$, or equivalently on $[-1,1]$ with a periodic boundary condition for $g_t$ on $[-1,1]$, i.e., $g_t(-1)=g_t(1)$ for all times $t\ge 0$.

All in all, the basic models introduced in this paper has paved a way to further generalizations that should be numerically investigated and thoroughly analyzed on every scale.

\appendix
\section{Proof of Theorem~\ref{thm:mean-field}}\label{append:proof}
 Without loss of generality, we may suppose $Z_0^i = \Z_0^i$, since they are identically distributed. Then, taking the difference of the solutions leads to
 \begin{align*}
  d(X_t^i - \X_t^i) = 0,\qquad d(U_t^i - \U_t^i) = -a_N^i\,dt + b_N^i\,dt,
 \end{align*}
 where the last two terms are given by
 \begin{align*}
 a_N^i &= \H'(U_t^i) - \H'(\U_t^i), \\
 b_N^i &= \frac{1}{N}\sum\nolimits_{j\ne i} \K(Z _t^i,Z _t^j) - \int_{S}\K(\Z_t^i,z')f_t(dz').
 \end{align*}
 The term $b_N^i$ may be further decomposed to obtain $b_N^i = c_N^i + d_N^i$, where
 \begin{align*}
  c_N^i &= \frac{1}{N}\sum\nolimits_{j\ne i} \Big[\K(Z_t^i,Z_t^j) - \K(\Z_t^i,\Z_t^j)\Big], \\
  d_N^i &= \frac{1}{N}\sum\nolimits_{j\ne i} \K(\Z_t^i,\Z_t^j) - \int_{S}\K(\Z_t^i,z')f(dz').
 \end{align*}
 Morever, the sums may be extended to sums over all of $j$ since $\K$ is feasible, i.e., $\K(x,u,y,u)=0$ for any $x,y\in\Omega$, $u\in J$. We now investigate the terms separately.
 
 We begin with the term $a_N^i$ that is easily estimated due to the regularity of $\H$,
 \[
  -\E[\langle U_t^i - \U_t^i, a_N^i\rangle] \le c_{\H}\E[|Z_s^i - \Z_s^i|^2].
 \]
 As for $c_N^i$, we simply use the Lipschitz continuity of $\K$ to obtain
 \[
  \E[\langle U_t^i - \U_t^i, c_N^i\rangle] \le c_{\K}\bigg[ \E[|Z_s^i - \Z_s^i|^2] + \frac{1}{N}\sum\nolimits_{j=1}^N \E[|Z_s^j - \Z_s^j|^2] \bigg].
 \]
 To estimate $d_N^i$, we first define
 \[
  \kappa_t^i(\Z_t^j) := \K(\Z_t^i,\Z_t^j)-\E[\K(\Z_t^i,\Z_t^j)] = \K(\Z_t^i,\Z_t^j) - \int_{S} \K(\Z_t^i,z')f_t(dz'),
 \]
 Clearly $\E[\kappa_t^i(\Z_t^j)|\Z_t^i]=0$ for any $j\ne i$. Furthermore, we have
 \[
  \E[\kappa_t^i(\Z_t^j)\kappa_t^i(\Z_t^k)] = \E[\E[\kappa_t^i(\Z_t^j)\kappa_t^i(\Z_t^k)|\Z_t^i]]  = \E[\E[\kappa_t^i(\Z_t^j)|\Z_t^i]\E[\kappa_t^i(\Z_t^k)|\Z_t^i]] =0,
 \]
 since the processes $\Z_t^j$ and $\Z_t^k$ are independent for $j\ne k\ne i$. Consequently
 \begin{align*}
  \E\left[\left|\frac{1}{N}\sum\nolimits_{j\ne i} \kappa_t^i(\Z_t^j)\right|^2\right] &= \frac{1}{N^2} \sum\nolimits_{j,k\ne i}\E[ \kappa_t^i(\Z_t^j)\kappa_t^i(\Z_t^k)] = \frac{N-1}{N^2} \E[\kappa_t^i(\Z_t^j)^2]\\
  &\le \frac{N-1}{N^2}\iint_{S\times S} |\K(z,z')|^2f_t(dz')f_t(dz).
 \end{align*}
 Therefore, we obtain, by Young's inequality, the estimate
 \begin{align*}
  \E[\langle Z_t^i - \Z_t^i, d_N^i\rangle] &\le \frac{1}{2}\E[|Z_t^i - \Z_t^i|^2] + \frac{1}{2}\E[|d_N^i|^2] \\
  &\le \frac{1}{2}\E[|Z_t^i - \Z_t^i|^2] + \frac{N-1}{2N^2}\iint_{S\times S} |\K(z,z')|^2f_t(dz')f_t(dz) \\
  &\le \frac{1}{2}\E[|Z_t^i - \Z_t^i|^2] + \frac{c_0}{N}.
 \end{align*}
 Now, set $Y_t^i = \E[|Z_t^i-\Z_t^i|^2]$. Then, by It\^o's calculus and the estimates above, we obtain
 \[
   \frac{d}{dt}Y_t^i = 2\,\E\left[ \langle Z_t^i-\Z_t^i, a_N^i + c_N^i + d_N^i\rangle\right] \le c \left[ Y_t^i + \frac{1}{N}\sum\nolimits_{j=1}^N Y_t^j + \frac{1}{N}\right].
 \]
Averaging over $1\le i\le N$ gives
 \[
  \frac{d}{dt}Y_t^{(N)}:=\frac{d}{dt}\frac{1}{N}\sum\nolimits_{i=1}^N Y_t^i \le \tilde c \left[ Y_t^{(N)} + \frac{1}{N}\right].
 \]
An application of the Gronwall inequality yields
 \[
  Y_t^N \le \frac{\tilde c}{N}t e^{\tilde c t}.
 \]
 Substituting this into the inequality for $Y_t^i$ and using Gronwall's inequality again yields
 \[
  \sup\nolimits_{t\in[0,T]}Y_t^i \le \frac{\tilde c}{N} T e^{2\tilde cT},
 \]
 which is precisely the required estimate for any $1\le i\le N\in\N$.\hfill\endproof

\bibliographystyle{plain}
\bibliography{disease}

\end{document}